\newtheorem{theorem}{Theorem}
\newtheorem{lemma}[theorem]{Lemma}
\newtheorem{corollary}{Corollary}
\newtheorem{remark}{Remark}
\begin{document}
\title[Polynomials ]{Some asymptotics for the Bessel functions with an explicit error term}

\author[I. Krasikov]{Ilia Krasikov }

\address{   Department of Mathematical Sciences,
            Brunel University,
            Uxbridge UB8 3PH United Kingdom}
\email{mastiik@brunel.ac.uk}

\subjclass{41A60, 33C10}

\date{4.07.11}


\begin{abstract}
We show how one can obtain an asymptotic expression for some special functions satisfying a second order differential equation with a very explicit error term starting from appropriate upper bounds.
   We will work out the details for the Bessel function $J_\nu (x)$ and the Airy function $Ai(x)$ and find a sharp approximation for their zeros.
   We also answer the question raised by Olenko by showing that
   $$c_1 \left| \nu^2-\frac{1}{4}\,\right| < \sup_{x \ge 0} x^{3/2}\left|J_\nu(x)-\sqrt{\frac{2}{\pi x}} \, \cos \left( x-\frac{\pi \nu}{2}-\frac{\pi}{4}\, \right)\right| <c_2 \left|\nu^2-\frac{1}{4}\,\right| ,
$$
$ \nu \ge -\frac{1}{2} \, ,$ for some explicit numerical constants $c_1$ and $c_2.$

\hspace{1ex}

\noindent
Keywords:
Bessel function, Airy function, asymptotic, error term, zeros
\noindent
\end{abstract}

\maketitle
\section{Introduction}
All basic formulas and asymptotic expressions for special functions we use  without references can be found in \cite{olv}. To write down error terms in a compact form we will use $\theta, \theta_1,\theta_2,...,$ to denote quantities with the absolute  value not exceeding one.

In most of the cases error terms of asymptotics of special functions are either not known or, at best, valid for a rather restricted rang of parameters.
The following is a typical example of that kind (see e.g. \cite[Ch. 10]{olv}).

The Bessel function $J_\nu (x)$ is defined by the series
\begin{equation}
\label{seriesbes}
J_\nu (x) =\left(\frac{x}{2} \right)^\nu \sum_{j=0}^\infty (-1)^j \frac{(x^2/4)^j}{j! \, \Gamma(j+\nu+1)}
\end{equation}
and is a solution of the following ODE:
\begin{equation}
\label{difeqbes}
x^2 J''_\nu (x)+x J'_\nu (x)+(x^2-\nu^2) J_\nu (x)=0.
\end{equation}
\begin{theorem}
\label{asybes}
Suppose that $\nu \ge 0,$ $x >0,$  $\omega_\nu =\frac{\pi \nu}{2}+\frac{\pi}{4} \, ,$ and let
$$\ell_1 \ge \max(\frac{\nu}{2}-\frac{1}{4}\, ,1), \; \;\; \ell_2 \ge \max(\frac{\nu}{2}-\frac{3}{4}\, ,1),$$
then
\begin{equation}
\label{olivapprox}
\sqrt{\frac{\pi x}{2}} \;J_{\nu}(x) =
\cos{(x- \omega_\nu)} \left( \sum_{i=0}^{\ell_1-1}
  \frac{a_{2i}(\nu )}{x^{2i} } + \theta_1^2 \, \frac{a_{2\ell_1}(\nu )}{x^{2\ell_1} }\right) -
  \end{equation}
  $$
\sin{(x-\omega_\nu )} \left(\sum_{i=0}^{\ell_2-1}
 \frac{a_{2i+1}(\nu )}{x^{2i+1} } + \theta_2^2 \, \frac{a_{2\ell_2+1}(\nu )}{x^{2\ell_2+1} }  \right), $$
where
$$a_i (\nu )= \frac{(\frac{1}{2}-\nu)_i (\frac{1}{2}+\nu)_i}{2^i i!}
. $$
\end{theorem}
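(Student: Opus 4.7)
The plan is to derive both the formal expansion and the explicit enveloping error bound from the classical integral representation of the Hankel functions. First, I would write $J_\nu(x) = \tfrac{1}{2}\bigl(H_\nu^{(1)}(x)+H_\nu^{(2)}(x)\bigr)$ and invoke the Schl\"afli--Sommerfeld representation
$$H_\nu^{(1)}(x) = \sqrt{\tfrac{2}{\pi x}}\,\frac{e^{i(x-\omega_\nu)}}{\Gamma(\nu+\tfrac12)}\int_0^\infty e^{-t}\, t^{\nu-1/2}\Bigl(1+\tfrac{it}{2x}\Bigr)^{\nu-1/2}dt,$$
valid for $\nu>-\tfrac12$ and $x>0$, together with its complex conjugate for $H_\nu^{(2)}$.

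Next I would expand the binomial factor with its integral Taylor remainder,
$$(1+w)^{\nu-1/2} = \sum_{k=0}^{2\ell-1}\binom{\nu-1/2}{k}w^k + 2\ell\binom{\nu-1/2}{2\ell}w^{2\ell}\int_0^1 (1-s)^{2\ell-1}(1+sw)^{\nu-1/2-2\ell}ds,$$
substitute into the integral representation, and evaluate the elementary Gamma integrals $\int_0^\infty e^{-t}t^{\nu+k-1/2}dt = \Gamma(\nu+k+\tfrac12)$. The resulting ratios of Gamma functions collapse, via the identity $(\tfrac12-\nu)_k(\tfrac12+\nu)_k/(2^k k!)$, to exactly the coefficients $a_k(\nu)$ in the theorem. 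Separating real and imaginary parts of $e^{i(x-\omega_\nu)}$ produces two sums which may be truncated independently at indices $\ell_1$ and $\ell_2$.

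The main obstacle is the sharp enveloping bound, namely that the remainder equals $\theta_i^2 \cdot a_{2\ell_i}(\nu)/x^{2\ell_i}$ with $\theta_i^2\in[0,1]$ rather than merely $O(x^{-2\ell_i})$. The threshold conditions $\ell_1\ge\max(\nu/2-1/4,1)$ and $\ell_2\ge\max(\nu/2-3/4,1)$ are calibrated so that the real part of the exponent $\nu-\tfrac12-2\ell_i$ is non-positive; combined with the fact that $1+sw$ has modulus at least one when $w=it/(2x)$ is purely imaginary, this forces $|(1+sw)^{\nu-1/2-2\ell_i}|\le 1$ uniformly in $s\in[0,1]$ and $t>0$. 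Inserting this bound into the remainder integral and carrying out the $t$-integration recovers precisely a positive multiple in $[0,1]$ of the first omitted coefficient divided by $x^{2\ell_i}$; a final parity check on the cosine and sine contributions then yields the squared $\theta_i$ form as stated.
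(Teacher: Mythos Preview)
The paper does not prove this theorem at all: it is quoted in the Introduction as a known result from \cite[Ch.~10]{olv}, precisely to illustrate the kind of expansion whose error term becomes unwieldy for large $\nu$. There is therefore no ``paper's own proof'' to compare against.

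That said, your outline is the classical derivation one finds in Olver's treatment. The Hankel integral representation, Taylor expansion of $(1+it/2x)^{\nu-1/2}$ with integral remainder, and the observation that the threshold $2\ell\ge \nu-\tfrac12$ forces the exponent in the remainder kernel to be non-positive (whence $|(1+sit/2x)^{\nu-1/2-2\ell}|\le 1$ on the imaginary axis) together give exactly the enveloping bound. Two points deserve tightening. First, your ``final parity check'' is doing real work: you must verify that after taking real parts the remainder in the cosine sum and the remainder in the sine sum each have the \emph{same sign} as their respective first omitted terms $a_{2\ell_1}(\nu)/x^{2\ell_1}$ and $a_{2\ell_2+1}(\nu)/x^{2\ell_2+1}$, not merely that they are bounded by them in modulus---this is what the $\theta_i^2$ (rather than $\theta_i$) notation encodes, and it requires tracking the sign of $i^{2\ell}$ against the sign of the Pochhammer product $(\tfrac12-\nu)_{2\ell}(\tfrac12+\nu)_{2\ell}$. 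Second, truncating the even and odd parts at different depths $\ell_1,\ell_2$ means you must apply the remainder argument separately to the real and imaginary parts of the Hankel integral, not to a single truncation of the full complex sum; this is straightforward but should be made explicit.
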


The assumption $\nu \ge 0$ is not really restrictive and can be surmount by, say, applying the three term recurrence for $J_\nu.$ However if $\nu$ is large or depends on $x$ estimating the error term in (\ref{olivapprox}) seems at least as difficult as the original task.

In this paper we show that there is a simple way to circumvent this problem and to find an explicit expression for error terms which is also uniform in the parameters, provided one has an a priory upper bound on the absolute value of the considered function.
In turn, in many cases such a bound may be obtained by using so-called Sonin's function. For Bessel and Airy functions, as well as for Hermite polynomials (see \cite{flk}), this can be done in a quite rutin way. For Jacobi and Laguerre polynomials it is a much more involved problem and the result is known only for oscillatory and transition regions \cite{kras07},\cite{kras08},\cite{kras10}. It is worth noticing that despite the fact that it is rather a technical problem and we do have appropriate tools to tackle it (see e.g. Lemmas \ref{monoton} and \ref{leftmax} below), one still needs a good deal of calculations to extend the bounds to monotonicity region. Thus, although the underlying idea of the method we use here is quit simple and can be applied to many special functions satisfying a second order differential equation, it is not utterly straightforward to work out the details. In this paper we will consider the Bessel function $J_\nu(x)$ as an important example to illustrate this approach. We provide asymptotic expressions with an explicit error term for oscillatory and transition regions and also give some new estimates in the monotonicity region. As a corollary we derive a surprisingly accurate approximation for the Airy function $Ai(-x), \; x >0,$ and obtain an approximation with an explicit error term for its positive zeros, which, due to  known inequalities, yields an approximation of the zeros of the Bessel function.

It is worth noticing that the obtained estimates are, in a sense, best possible. In particular we will answer a question raised by Olenko \cite{olenko} by showing that for $\nu \ge -\frac{1}{2} \, ,$
\begin{equation}
\label{olquest}
c_1 \left|\nu^2-\frac{1}{4}\,\right| < \sup_{x \ge 0} x^{3/2}\left|J_\nu(x)-\sqrt{\frac{2}{\pi x}} \, \cos \left( x-\frac{\pi \nu}{2}-\frac{\pi}{4}\, \right)\right| <c_2 \left|\nu^2-\frac{1}{4}\,\right|,
\end{equation}
for some explicit numerical constants $c_1$ and $c_2.$

The paper is organized as follows. In the next section we describe an idea of the method. In section \ref{secup} we establish some upper bounds on Bessel and Airy functions we need in the sequel. Our main tool here will be Sonin's function. In section \ref{secosc}
we consider the error term of the standard asymptotic
$$ J_\nu(x) \sim \sqrt{\frac{2}{\pi x}} \, \cos \left( x-\frac{\pi \nu}{2}-\frac{\pi}{4} \right),$$
and prove (\ref{olquest}), thus answering Olenko's question. The error term for asymptotic in the transition region will be derived in section \ref{sectran}.
Finally, in section \ref{secanother}, using the approach of section \ref{secprel}, we establish a different sharper approximation for
Bessel and Airy functions and their zeros.

\section{Preliminaries }
\label{secprel}
The following approach was shortly described in \cite{kras10}.
We want to find an approximation of a solution of the differential equation
\begin{equation}
\label{normformeq}
f''+b^2(x)f(x)=0,
\end{equation}
in terms of some standard function $F(x),$ which also satisfies a second order ODE
$$\mathcal{D}_1(F)=p_2 (x)F''+p_1(x)F'+p_0(x)F=0.$$
In fact, in what follows we choose $F$ to be just $\cos \phi(x)$ with an appropriate function $\phi.$

A possible approach to this problem is to seek for a multiplier function $z(x)$ such that
the differential operator
$$\mathcal{D}_2(g)=q_2 (x)g''+q_1(x)g'+q_0(x)g$$
for $g=g(x)=z(x)f(x)$ is in some sense close to $\mathcal{D}_1.$
For example, in the WKB-type approximation one chooses $g(x)=\sqrt{b(x)} \, f(x),$ yielding
\begin{equation}
\label{epsileq}
g''-\frac{b'}{b} \, g' +b^2 g \left(1 +\epsilon(x) \right)=0, \; \; \; \epsilon(x)=\frac{3b'^2-2b b''}{4b^4} \, .
\end{equation}
If $\epsilon$ is small we can expect that $g(x)$ is close to the solution of the equation
$$g_0''-\frac{b'}{b} \, g_0' +b^2 g_0=0,$$
which is just $g_0=M \cos \mathcal{B}(x),$
where $\mathcal{B}(x)=\int b(x) d x.$

Assume now that we have an a priori bound $|g(x)| \le C.$ Then we can readily estimate the error term $|g-g_0|$ by solving (\ref{epsileq}) as an inhomogeneous equation,
\begin{equation}
\label{eqwkb}
g(x)=g_0-\int^x \epsilon (t)b(t)\sin \left(\mathcal{B}(x)-\mathcal{B}(t) \right) g(t) dt ,
\end{equation}
thus obtaining
$$g=g_0+ \theta  C \int^x \epsilon (t)b(t) dt . $$

To derive an upper bound on $|g(x)|$ we consider Sonin's function
$$S(x)=S(g;x)=g^2+\frac{g'^2}{b^2(1+\epsilon)}  =g^2+ \frac{4b^2}{4b^4-2b b''+3b'^2} \, g'^2 ,$$
then
$$S'(x)= \frac{8b(6b'^3-6b b' b''+b^2 b''')}{(4b^4+3b'^2-2b b'')^2} g'^2 .$$
Thus, if $4b^4-2b b''+3b'^2 >0$ and $6b'^3-6b b' b''+b^2 b'''>0$ then $S' >0,$ and we obtain
$g^2(x) <S(\infty ).$

Moreover, one can also get an upper bound on $S$ in the following way:
$$S-\frac{b(4b^4+3b'^2-2b b'')}{2(6b'^3-6b b' b''+b'^2 b''')} \, S'=g^2 \ge 0,$$
that is
$$S'/S \le \frac{2(6b'^3-6b b' b''+b'^2 b''')}{b(4b^4+3b'^2-2b b'')} =\frac{d}{dx} \, ln \frac{b^4}{4b^4+3b'^2-2b b''} \, ,$$
provided the last expression is nonnegative.
Integrating, we find
$$\frac{S(y)}{S(x)} \le \frac{1+ \frac{3b'^2(x)}{4b^4(x)}-\frac{b''(x)}{2b^3(x)}}{1+ \frac{3b'^2(y)}{4b^4(y)}-\frac{b''(y)}{2b^3(y)}} =\frac{1+\epsilon (x)}{1+\epsilon (y)}\, .$$
This shows that the envelop of $g^2(x)$ given by $S(x)$ is almost a constant as far as $\epsilon (x)=o(1).$

In practically important examples the situation is somewhat more subtle as the coefficient $b(x)$ may vanish. For instance for the Bessel function $b(x)=x^{-1} \sqrt{x^2-\nu^2+1/4}\, ,$ and Sonin's function does not provide any information for the monotonicity region $0 <x < \sqrt{\nu^2-1/4}\, .$ Thus, one needs some supplementary estimates to extend the bounds on $|g(x)|$ to this interval. Let us notice that although the behaviour of the solutions of (\ref{normformeq}) looks less complicate in the monotonicity region, it, probably, allows only a piecewise approximation in reasonably simple elementary functions.

Another rather technical problem is how to find the constants of integration in $g_0.$ Here one either has to know, at least approximately, the value of $g(x)$ at some points, e.g. at infinity, or to be able to match asymptotics in the oscillatory and transition regions.

It is worth noticing that an asymptotic with an explicit error term in the transition region, that is around a zero of $b(x),$ can be obtained quite directly, provided we know a bound on $|g(x)|.$ Indeed, let $b(\alpha)=0,$ and let
$d=\left. \frac{d}{dx}\, b^2(x) \right|_{x=\alpha} ,$ then in a vicinity of $\alpha$ we can write
$$b^2(\alpha+d^{-1/3} t)=d^{2/3} t +\delta(t),  $$
where $\delta(t)$ is small.
The function $y(t)=f(\alpha+d^{-1/3} t)$ satisfies an Airy type ODE
$$y''(t)+t y(t) =-\delta(t) d^{-2/3} y(t).$$
Solving it as an inhomogeneous equation one gets an explicit error term as above, yet facing again the problem of fixing integration constants.

\section{Upper bounds}
\label{secup}
 For the Bessel function it will be convenient to introduce the parameter $\mu = |\nu^2- \frac{1}{4} \, |.$  We also use the above notation $\omega_\nu =\frac{\pi \nu}{2}+\frac{\pi}{4} \, .$

In this section we establish some upper bounds we need in the sequel.
A simplest inequality of this type \cite{watson} states that for $x$ real
\begin{equation}
\label{watsineq}
J_\nu (x) \le \frac{|x|^\nu }{2^\nu \Gamma (\nu+1)} \, .
\end{equation}
For our purposes we need much more accurate estimates.
To bound the Bessel function in the monotonicity region we will apply the following inequality given in \cite{kras06} . We sketch a proof for self-completeness.
\begin{lemma}
\label{monoton}
Let $\mathcal{J}_\nu(x)=x^{-\nu} J_\nu (x), \; \; \nu \ge - \frac{1}{2} \, ,$ then for $0 < x \le \nu+\frac{1}{2} \, ,$
\begin{equation}
\label{eqmonoton}
\frac{\mathcal{J}_\nu'(x)}{\mathcal{J}_\nu(x)} \ge \frac{\sqrt{(2\nu+1)^2-4x^2}-2\nu-1}{2x} \ge  - \frac{2x}{2\nu+1}\, .
\end{equation}
\end{lemma}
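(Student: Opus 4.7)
Write $a := 2\nu + 1$ and $R(x) := \sqrt{a^2 - 4x^2}$, so that the middle quantity in (\ref{eqmonoton}) becomes $L_*(x) := (R(x) - a)/(2x)$. The plan is to pass to the logarithmic derivative $L(x) := \mathcal{J}_\nu'(x)/\mathcal{J}_\nu(x)$, observe that it satisfies a Riccati equation whose associated algebraic root is precisely $L_*$, and then compare the two by a one-shot sign argument. Substituting $J_\nu(x) = x^\nu \mathcal{J}_\nu(x)$ into (\ref{difeqbes}) and dividing by $x^{\nu+1}$ gives
$$x \mathcal{J}_\nu''(x) + a \mathcal{J}_\nu'(x) + x \mathcal{J}_\nu(x) = 0,$$
which, on any interval where $\mathcal{J}_\nu > 0$, is equivalent to the Riccati equation
$$L'(x) + L(x)^2 + \frac{a}{x} L(x) + 1 = 0.$$

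Two elementary facts about $L_*$ come next. Direct substitution shows $L_*^2 + (a/x) L_* + 1 \equiv 0$ on $(0, a/2]$, so $L_*$ is the larger root of that quadratic in $Y$; and a short computation using $R R' = -4x$ yields
$$L_*'(x) = \frac{a(R - a)}{2 x^2 R} < 0 \quad \text{on } (0, a/2),$$
with $L_*(0^+) = 0$ and $L_*(a/2) = -1$. From the series (\ref{seriesbes}) one also reads off $L(x) = -x/(2(\nu+1)) + O(x^3)$, and since $-1/(2\nu+2) > -1/(2\nu+1)$, we have $L(x) > L_*(x)$ for small $x > 0$.

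The heart of the argument is the Riccati comparison. Let $[0, x_1)$ be the maximal interval on which $\mathcal{J}_\nu > 0$, and suppose for contradiction that $L$ first meets $L_*$ at some $x_0 \in (0, \min(x_1, a/2)]$. Then $(L - L_*)'(x_0) \le 0$; but the Riccati equation together with the identity $L_*(x_0)^2 + (a/x_0) L_*(x_0) + 1 = 0$ forces $L'(x_0) = 0$, whereas $L_*'(x_0) < 0$ — a contradiction. Hence $L > L_*$ on $(0, \min(x_1, a/2))$, and integrating the resulting bound $L \ge L_* \ge -1$ gives $\mathcal{J}_\nu(x) \ge \mathcal{J}_\nu(0) e^{-x} > 0$ there, so $x_1 \ge a/2 = \nu + \tfrac{1}{2}$ and the first inequality of (\ref{eqmonoton}) holds on the full interval. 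The second inequality $L_*(x) \ge -2x/a$ reduces, after clearing $2x$ and using $R^2 = a^2 - 4x^2$, to $R \le a$, which is trivial.

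The only non-routine step is the sign-of-derivative contradiction in the third paragraph; once that is set up, the Riccati reformulation, the monotonicity of $L_*$, and the bootstrapping of positivity of $\mathcal{J}_\nu$ across the full interval are all bookkeeping.
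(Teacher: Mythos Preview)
Your argument is correct, but the route differs from the paper's in a meaningful way. The paper invokes the Laguerre inequality $\mathcal{J}_\nu'^2 - \mathcal{J}_\nu \mathcal{J}_\nu'' > 0$, valid because $\mathcal{J}_\nu$ lies in the Laguerre--P\'olya class (entire with only real zeros); substituting $\mathcal{J}_\nu''$ from the ODE and dividing by $\mathcal{J}_\nu^2$ immediately gives the quadratic inequality $x t^2 + (2\nu+1)t + x > 0$ for $t = L$, so $L$ is forced outside the interval between the two roots, and the limit $t(0^+)=0$ picks the correct side. In Riccati language the Laguerre inequality is precisely the global statement $L' < 0$, which through $L' = -(L^2 + (a/x)L + 1)$ yields the quadratic inequality everywhere at once. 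You bypass this structural input and argue locally instead: at a first crossing $L(x_0) = L_*(x_0)$ the Riccati equation forces $L'(x_0) = 0$ while $L_*'(x_0) < 0$, so no crossing exists. This is more elementary --- it uses nothing beyond the ODE --- and your bootstrapping step has the side benefit of explicitly establishing $\mathcal{J}_\nu > 0$ on $(0,\nu+\tfrac12]$, which the paper takes for granted. One small wrinkle: at the endpoint $x_0 = a/2$ one has $R=0$ and $L_*' \to -\infty$, so your sign-of-derivative contradiction should be run only on the open interval $(0,a/2)$; the inequality at $x=a/2$ then follows by continuity. The final reduction of the second inequality actually comes down to $R(a-R)\ge 0$, i.e.\ to $0\le R\le a$, rather than to $R\le a$ alone, but this is of course immediate.
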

\begin{proof}
 The function $\mathcal{J}_\nu(x)$ is an entire function with only real zeros satisfying the Laguerre inequality $\mathcal{J}_\nu'^2-\mathcal{J}_\nu \mathcal{J}_\nu'' >0.$  Substituting here $\mathcal{J}_\nu''$ from the differential equation
$$x \mathcal{J}_\nu''+(2\nu+1)\mathcal{J}_\nu'+x \mathcal{J}_\nu=0, $$ and dividing by $\mathcal{J}_\nu^2/x$
we obtain
$$ x t^2(x)+(2\nu+1)t(x)+x \ge 0,$$
where $t(x)=\mathcal{J}_\nu'/\mathcal{J}_\nu.$
Hence,
$$t(x) \notin \left( - \frac{\sqrt{(2\nu+1)^2-4x^2}+2\nu+1}{2x} \, ,\frac{2x}{\sqrt{(2\nu+1)^2-4x^2}+2\nu+1}\right) .$$
Since $t(x) \rightarrow 0$ when $x \rightarrow 0^+,$ whereas $- \frac{\sqrt{(2\nu+1)^2-4x^2}+2\nu+1}{2x}\rightarrow - \infty,$
we conclude that
$$t(x) \ge - \frac{2x}{\sqrt{(2\nu+1)^2-4x^2}+2\nu+1} \ge - \frac{2x}{2\nu+1}\, .$$
\end{proof}
We need the following very accurate estimate giving the value of  $J_\nu(\nu):$
\begin{equation}
\label{besnunu}
J_\nu(\nu)=\frac{2^{1/3}}{3^{2/3} \Gamma (\frac{2}{3} \,) (\nu+\theta^2 \alpha)^{1/3}} \, , \; \; \; \nu >0,
\end{equation}
where $\alpha=0.09434980...$, \cite{elber1}.

 The following theorem improves the inequality
$$J_\nu (t \nu)< J_\nu (\nu) t^\nu e^{(1-t)\nu} \,, \; \; \;  \nu>0, \; \; 0<t<1. $$
given in \cite{paris}. For large $\nu$ it is also stronger than the classical inequality
$$J_\nu (x ) < \frac{x^\nu}{2^\nu \Gamma (\nu+1)} \, e^{-x^2/4(\nu+1)} ,$$
\cite[p. 16]{watson}, provided $t >\sqrt{\ln 16-2} \approx 0.88.$

\begin{theorem}
\label{monotonoz}
For $x=t \nu, \; \; 0 <t \le 1$ and $\nu >0,$
\begin{equation}
\label{ozmonj}
J_\nu (t \nu) \le  J_\nu (\nu) t^\nu \exp{\left(\frac{\nu^2(1 - t^2)}{2\nu+1} \right)} < \frac{2^{1/3} x^\nu}{3^{2/3} \Gamma (\frac{2}{3}) \nu^{\nu+1/3}}\,
\exp{\left(\frac{\nu^2 - x^2}{2\nu+1} \right)}.
\end{equation}
\end{theorem}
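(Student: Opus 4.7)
The plan is to use Lemma \ref{monoton} directly by integrating the logarithmic derivative of $\mathcal{J}_\nu(x) = x^{-\nu}J_\nu(x)$ from $x = t\nu$ up to $x = \nu$. Since $0 < t \le 1$ and $\nu > 0$, we have $t\nu \le \nu < \nu + \tfrac{1}{2}$, so the weaker bound $\mathcal{J}_\nu'(x)/\mathcal{J}_\nu(x) \ge -2x/(2\nu+1)$ applies throughout the interval of integration, and $\mathcal{J}_\nu$ stays positive there (this is implicit in the lemma, since the first zero of $J_\nu$ exceeds $\nu + \tfrac{1}{2}$ in the regime considered). Integrating gives
$$\ln \mathcal{J}_\nu(\nu) - \ln \mathcal{J}_\nu(t\nu) \ge -\int_{t\nu}^{\nu}\frac{2s}{2\nu+1}\,ds = -\frac{\nu^2(1-t^2)}{2\nu+1},$$
so $\mathcal{J}_\nu(t\nu) \le \mathcal{J}_\nu(\nu) \exp\!\left(\tfrac{\nu^2(1-t^2)}{2\nu+1}\right)$. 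Multiplying through by $(t\nu)^\nu$ and observing that $(t\nu)^\nu/\nu^\nu = t^\nu$ yields the first inequality of (\ref{ozmonj}).

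For the second inequality, I would substitute the explicit value of $J_\nu(\nu)$ from (\ref{besnunu}). Since $\alpha > 0$ and $\theta^2 \ge 0$, we have $(\nu + \theta^2\alpha)^{1/3} \ge \nu^{1/3}$, so $J_\nu(\nu) \le \frac{2^{1/3}}{3^{2/3}\Gamma(2/3)\nu^{1/3}}$. Writing $t^\nu = x^\nu/\nu^\nu$ and $\nu^2(1-t^2) = \nu^2 - x^2$ then converts the previous bound into the stated right-hand side of (\ref{ozmonj}). The strict inequality is inherited from the fact that $\theta^2\alpha > 0$ for the actual $\theta$ associated with each $\nu$ (equivalently, the asymptotic expansion of $J_\nu(\nu)$ shows $(\nu + \theta^2\alpha)^{1/3}$ is strictly larger than $\nu^{1/3}$).

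The proof is essentially routine once Lemma \ref{monoton} and the estimate (\ref{besnunu}) are in hand; the only mild subtlety is confirming that $\mathcal{J}_\nu$ is positive on $[t\nu, \nu]$ so the logarithm is well-defined, but this is automatic from the hypothesis of the lemma. No significant obstacle is expected.
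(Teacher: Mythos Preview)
Your proof is correct and follows essentially the same route as the paper: integrate the lower bound $\mathcal{J}_\nu'/\mathcal{J}_\nu \ge -2x/(2\nu+1)$ from Lemma~\ref{monoton} over $[t\nu,\nu]$, then insert the estimate (\ref{besnunu}) for $J_\nu(\nu)$. The paper is terser about the strictness of the second inequality and the positivity of $\mathcal{J}_\nu$, but your added remarks on these points are accurate and do not change the argument.
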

\begin{proof}
By the previous lemma we have
$$\ln \frac{\mathcal{J}_\nu(\nu)}{\mathcal{J}_\nu(x)} \ge - \int_x^\nu  \frac{2z}{2\nu+1} \, dz = - \frac{\nu^2 - \; x^2}{2\nu+1},$$
hence
$$\mathcal{J}_\nu(t \nu) \le  \mathcal{J}_\nu(\nu)\exp{\left(\frac{\nu^2(1 - t^2)}{2\nu+1} \right)},$$
which together with (\ref{besnunu})
 yields the required result.
\end{proof}
\begin{remark}
The function $\mathcal{J}_\nu(x)=x^{-\nu} J_\nu (x)$ of Lemma \ref{monoton} belongs to so-called P\'{o}lya-Laguerre class and satisfies the infinite
series of inequalities:
\begin{equation}
\label{pjen}
L_{m}(\mathcal{J}_\nu)=\sum_{j=0}^{2m} (-1)^{m+j} \frac{{2m \choose j}}{(2m)!} \mathcal{J}_\nu^{(j)} \mathcal{J}_\nu^{(2m-j)} \ge 0,
\end{equation}
where $L_1$ is the usual Laguerre inequality $\mathcal{J}_\nu'^2-\mathcal{J}_\nu \mathcal{J}_\nu'' \ge 0,$ (see e.g. \cite{patrick1}, \cite{patrick2}).
Using $L_m(\mathcal{J}_\nu)>0$ for $m>1$ leads to much more precise yet more complicated bounds on $\mathcal{J}_\nu'/\mathcal{J}_\nu$ and consequently on $J_\nu.$ Alternatively,
one can use the inequality $L_1 (\mathcal{J}_\nu+\lambda \mathcal{J}'_\nu ) \ge 0,$
$\lambda \in \mathbb{R},$ then optimizing in $\lambda.$ It is worth noticing that both methods give an inequality similar to (\ref{eqmonoton}) but in the opposite direction. Thus, one can use the known value of $\mathcal{J}_\nu (0)$ instead of $\mathcal{J}_\nu (\nu).$
\end{remark}

Our main tool for bounding solutions of the second order differential equations will be  Sonin's function.
In particular, it was used by Szeg\"{o} to prove that for $|\nu| \le \frac{1}{2} \, ,$
\begin{equation}
\label{besszegj}
|J_\nu(x)|  \le \sqrt{\frac{2}{\pi x}}\,  ,
 \end{equation}
 Although he did not state this explicitly, his proof of Theorem 7.31.2. immediately implies
 \begin{equation}
 \label{besszegy}
 |Y_\nu(x)|  \le \sqrt{\frac{2}{\pi x}}\,.
 \end{equation}
 His arguments go as follows:
 let $y$ be a solution of the Bessel differential equation
 $$ x^2 y''+x y'+(x^2-\nu^2)y=0,$$
 then for $|\nu| \le \frac{1}{2}$ Sonin's function
 \begin{equation}
 \label{szegsonin}
 S(x)=x y^2+\frac{x^2}{x^2 - \mu} \, \left(\frac{d}{dx}\, \sqrt{x }\,y \right)^2
 \end{equation}
is increasing and inequalities (\ref{besszegj}) and (\ref{besszegy}) follow by calculating $S(\infty)$ from known asymptotics of $J_\nu$ and $Y_\nu,$
whereas for $\nu >\frac{1}{2}$ it is decreasing for $x> \sqrt{\mu },$ and does not lead, at least directly, to any explicit inequality.

It turns out that for $\nu > 1/2$ it is more natural to deal with the function
\begin{equation}
\label{defh}
\mathcal{H}_\nu(x)=|x^2-\mu |^{1/4} J_\nu (x),
\end{equation}
 rather than $ \sqrt{x} \, J_\nu (x).$
Here we will refine an inequality for the Bessel function obtained in \cite{flk}.
First we need a bound on location of the leftmost maximum of $\mathcal{H}_\nu(x).$
\begin{lemma}
\label{leftmax}
The first positive maximum of $\mathcal{H}_\nu(x), \; \; \nu \ge \frac{5}{3} \, ,$ is attained at a point $\xi$ satisfying
$$\xi > \nu \sqrt{1-(2\nu)^{-2/3}}\,$$
\end{lemma}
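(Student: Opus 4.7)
The plan is to reduce the statement to a single algebraic inequality by combining the critical-point equation for $\mathcal{H}_\nu$ with the lower bound on $J_\nu'/J_\nu$ furnished by Lemma \ref{monoton}. Because $\nu\geq 5/3>1/2$, we have $\mu=\nu^2-\tfrac14$, and the point $u_0:=\nu^2(1-(2\nu)^{-2/3})$ satisfies $u_0<\mu$ (a direct check), so $\sqrt{u_0}<\sqrt{\mu}$. If the first positive maximum $\xi$ lies in $[\sqrt{\mu},\infty)$, the conclusion is immediate. So assume $\xi<\sqrt{\mu}$, where $\mathcal{H}_\nu(x)=(\mu-x^2)^{1/4}J_\nu(x)$ is smooth and $J_\nu(\xi)>0$ (since $\xi<\sqrt\mu<\nu<j_{\nu,1}$). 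Differentiating and imposing $\mathcal{H}_\nu'(\xi)=0$ gives the critical-point equation
\[
\frac{J_\nu'(\xi)}{J_\nu(\xi)}=\frac{\xi}{2(\mu-\xi^2)}.
\]

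Next, I would use $\mathcal{J}_\nu=x^{-\nu}J_\nu$, so $J_\nu'/J_\nu=\nu/x+\mathcal{J}_\nu'/\mathcal{J}_\nu$, and invoke Lemma \ref{monoton} (whose hypothesis $x\leq\nu+\tfrac12$ is met since $\xi<\nu$) to conclude
\[
\frac{J_\nu'(\xi)}{J_\nu(\xi)}\;\geq\;\frac{\sqrt{(2\nu+1)^2-4\xi^2}-1}{2\xi}.
\]
Combining the two displays and clearing denominators yields $(\mu-\xi^2)\sqrt{(2\nu+1)^2-4\xi^2}\leq \mu$, and squaring (both sides positive because $\xi<\sqrt\mu$) gives
\[
P(\xi^2)\leq 0,\qquad P(u):=(\mu-u)^2\bigl((2\nu+1)^2-4u\bigr)-\mu^2.
\]

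It then suffices to show $P(u)>0$ on $[0,u_0]$, which contradicts $P(\xi^2)\leq 0$ whenever $\xi^2\leq u_0$. Differentiation gives $P'(u)=-2(\mu-u)\bigl[(2\nu+1)^2+2\mu-6u\bigr]$; at $u=\mu$ the bracket equals $4\nu+2>0$, so $P$ is strictly decreasing on $[0,\mu)$, and it is enough to verify $P(u_0)>0$. Introducing the substitution $s=(2\nu)^{1/3}$ (so $s>1$ whenever $\nu>\tfrac12$) I would compute $\mu=(s^6-1)/4$, $\mu-u_0=(s^4-1)/4$, and $(2\nu+1)^2-4u_0=s^4+2s^3+1$, whereupon $P(u_0)>0$ becomes
\[
(s^2+1)^2(s^4+2s^3+1)\;>\;(s^4+s^2+1)^2,
\]
after canceling the common factor $(s^2-1)^2>0$. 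Expanding both sides collapses the difference to $s^3(2s^4+4s^2-s+2)$, which is positive for all $s>0$ (the quartic factor is bounded below by $2-s$ on $[0,1/4]$ and by $2$ on $[1/4,\infty)$).

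The main obstacle is the last step: arranging the fractional powers so that the contested inequality becomes a clean polynomial identity. The substitution $s=(2\nu)^{1/3}$ is essentially forced by the exponent $(2\nu)^{-2/3}$ in the definition of $u_0$, and once it is made, every quantity in $P(u_0)$ turns out to be a polynomial in $s$ with small integer coefficients, so the final factorization $s^3(2s^4+4s^2-s+2)$ falls out mechanically.
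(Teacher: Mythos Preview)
Your argument is correct and follows the same route as the paper: both combine the critical-point equation for $\mathcal H_\nu$ with the sharper inequality of Lemma~\ref{monoton}, reduce to a cubic condition in $\xi^2$, and finish by the substitution $s=(2\nu)^{1/3}$ (the paper writes $\nu=r^3/2$), arriving at exactly the same decisive factor $2s^4+4s^2-s+2$.

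The one genuine difference is in how the polynomial step is handled. The paper works with the sextic $p(\xi)$, invokes its discriminant to guarantee a unique positive root (this is where the restriction $\nu\ge 5/3$ enters), and then brackets that root by checking signs at $\nu$ and at $\nu\sqrt{1-(2\nu)^{-2/3}}$. You instead observe that $P(u)=(\mu-u)^2\bigl((2\nu+1)^2-4u\bigr)-\mu^2$ is strictly decreasing on $[0,\mu]$ because the bracket in $P'(u)=-2(\mu-u)\bigl[(2\nu+1)^2+2\mu-6u\bigr]$ stays above $4\nu+2$, so a single sign check $P(u_0)>0$ suffices. This is cleaner, bypasses the discriminant computation entirely, and in fact yields the conclusion for every $\nu>\tfrac12$ (since your verifications $0<u_0<\mu$ and $P(u_0)>0$ only use $s>1$), so your argument actually proves more than the stated lemma.
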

\begin{proof}
Since obviously $0<\xi < \mu,$ we can restrict ourselves to the interval $(0,\mu)$ and write down
$$\mathcal{H}_\nu (x)=x^\nu (\mu-x^2)^{1/4} \mathcal{J}_\nu(x),$$
where as above $\mathcal{J}_\nu(x)=x^{-\nu} J_\nu (x).$
Then
$$0=\mathcal{H}'_\nu(\xi)= \frac{\xi^{\nu-1}}{2(\mu-\xi^2)^{3/4}}
\left(2(\mu-\xi^2)(\xi t(\xi)+\nu)-\xi^2 \right)\mathcal{J}_\nu(\xi),$$
where $t(x)=\mathcal{J}_\nu'(x)/\mathcal{J}_\nu(x).$ Hence
$$t(\xi)= - \frac{(2\nu+1)(2\nu^2-\nu-2\xi^2)}{(4\nu^2-1-4\xi^2)\xi} \, , $$
and comparing this with (\ref{ozmonj}) we obtain the inequality
$$\frac{(2\nu+1)(2\nu^2-\nu-2\xi^2)}{4\nu^2-1-4\xi^2} \le
\frac{2\nu+1-\sqrt{(2\nu+1)^2-4\xi^2}}{2} \, .$$
Simplifying we get
$$16\xi^6-4(2\nu+1)(6\nu-1)\xi^4+(2\nu-1)(2\nu+1)^2(6\nu+1)\xi^2+\nu(\nu+1)(4\nu^2-1)^2 :=p(\xi) \ge 0.$$
Observe that for $\nu \ge 5/3$ this polynomial has the only positive zero $\xi_0,$
Indeed, the discriminant of $p(\xi)$ in $\xi,$
 up to an irrelevant numerical factor, is
 $$ \nu (\nu+1)(2\nu-1)^6 (2\nu+1)^{10} (108 \nu^2-172\nu-5)^2.$$
 Thus the number of positive zeros does not change for $108 \nu^2-172\nu-5 >0,$
 in particular for $\nu >5/3 \, .$
 For $\nu=5/2$ we obtain the following test equation
 $\xi^6-21\xi^4+144\xi^2-315=0,$
 with the only positive zero $\xi \approx 2.14.$
 Finally,
 $$p(\nu )=\nu (4{\nu}^3-2\nu-1)>0,$$
 and using the substitution $n=r^3/2,$ we find
 $$p(\nu \sqrt{1-(2\nu)^{-2/3}}\; )= p(\sqrt{r^6-r^4} /2)=-r^3 (r^2-1)^2(2r^4+4r^2-r+2) <0,$$
 hence
 $$\xi >\nu \sqrt{1-(2\nu)^{-2/3}}\,. $$
\end{proof}

\begin{theorem}
\label{bessel}
Let $\nu > \frac{1}{2} \, ,$ then for $x \ge 0 ,$
\begin{equation}
\label{eqbess}
| x^2-\mu |^{1/4} |J_\nu (x)| < \sqrt{\frac{2}{\pi}}\, ,
\end{equation}
and the constant $\sqrt{\frac{2}{\pi}}\, $ is best possible.
\end{theorem}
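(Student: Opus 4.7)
The plan is to match $\mathcal{H}_\nu(x)$ to the WKB framework of Section \ref{secprel} and split the analysis at the transition point $x = \sqrt{\mu}$. A direct calculation shows that $f = \sqrt{x}\,J_\nu$ satisfies the normal-form equation $f'' + b^2 f = 0$ with $b(x) = \sqrt{x^2 - \mu}/x$, and the WKB substitution $g = \sqrt{b}\, f$ gives back exactly $g = (x^2-\mu)^{1/4} J_\nu = \mathcal{H}_\nu$ on $x > \sqrt{\mu}$. Thus Sonin's function $S(x)$ from Section \ref{secprel} applies verbatim to $\mathcal{H}_\nu$ on the oscillatory region, and the theorem reduces to two parallel statements, one on each side of $\sqrt{\mu}$.

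On $x > \sqrt{\mu}$ I would show that $S$ is strictly increasing with $S(\infty) = 2/\pi$, which forces $\mathcal{H}_\nu^2(x) \le S(x) < 2/\pi$. The limit is read off from the classical asymptotic $J_\nu(x) \sim \sqrt{2/(\pi x)}\,\cos(x - \omega_\nu)$ together with the corresponding asymptotic for $J'_\nu$: the two summands in $S$ tend respectively to $(2/\pi)\cos^2(x-\omega_\nu)$ and $(2/\pi)\sin^2(x-\omega_\nu)$. Monotonicity of $S$ reduces to the two positivity conditions $4b^4 + 3b'^2 - 2bb'' > 0$ and $6b'^3 - 6bb'b'' + b^2 b''' > 0$ recorded in Section \ref{secprel}. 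With $b = \sqrt{x^2-\mu}/x$, the derivatives $b', b'', b'''$ are explicit rational functions in $x$ and $\mu$, and after clearing denominators both positivity conditions boil down to sums of products of positive powers of $x^2 - \mu$ and $\mu$, hence are immediate on the oscillatory region.

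For the monotonicity region $0 \le x \le \sqrt{\mu}$, note that $\mathcal{H}_\nu$ vanishes at both endpoints and is therefore bounded by its value at its leftmost interior maximum $\xi$. For $\nu \ge 5/3$, Lemma \ref{leftmax} localises this maximum to $\xi > \nu\sqrt{1-(2\nu)^{-2/3}}$, which bounds $\mu-\xi^2$ from above. Combining this with the explicit upper estimate on $J_\nu(\xi)$ from Theorem \ref{monotonoz} and simplifying the resulting elementary expression yields $\mathcal{H}_\nu(\xi) < \sqrt{2/\pi}$; the small remaining range $1/2 < \nu < 5/3$, outside the hypothesis of Lemma \ref{leftmax}, should be handled separately, for instance by a direct estimate on the compact region. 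Sharpness of the constant is immediate from the envelope behaviour already used in computing $S(\infty)$: $\mathcal{H}_\nu^2(x)$ approaches $(2/\pi)\cos^2(x-\omega_\nu)$ as $x \to \infty$.

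I expect the monotonicity-region step to be the main obstacle: Theorem \ref{monotonoz} on its own is not quite tight enough to beat $\sqrt{2/\pi}$, and the precise location of $\xi$ from Lemma \ref{leftmax} has to be exploited carefully to produce the necessary slack, while the outlier range $\nu \in (1/2,\,5/3)$ requires a separate treatment. By contrast, the Sonin verification in the oscillatory region, although computationally involved, is entirely mechanical.
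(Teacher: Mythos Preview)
Your outline matches the paper's proof almost exactly: Sonin's function for $\mathcal{H}_\nu$ on the oscillatory region $x>\sqrt{\mu}$ with $S(\infty)=2/\pi$, and on the monotonicity region the combination of Lemma~\ref{leftmax} with Theorem~\ref{monotonoz} for large $\nu$, plus a direct elementary bound for the small-$\nu$ outlier range. The only cosmetic difference is the split point: the paper uses Watson's bound $J_\nu(x)\le x^\nu/(2^\nu\Gamma(\nu+1))$ to cover $\tfrac{1}{2}\le\nu\le 3$ (broken into three sub-intervals), and reserves the Lemma~\ref{leftmax}/Theorem~\ref{monotonoz} argument for $\nu\ge 3$, where the resulting elementary inequality in $r=(\nu/2)^{1/3}$ is easiest to verify.
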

\begin{proof} For $x = \sqrt{\mu} $ the result is trivial. Otherwise
we shall consider three cases.
\\
{\it Case 1:} $x \ge \sqrt{\mu}.$
The function
$$\mathcal{H}_\nu(x)=(x^2 - \mu)^{1/4}  J_{\nu}(x) ,$$
as easy to check,
 satisfies the differential equation
 $$\mathcal{H}''_\nu(x)-\frac{\mu}{x(x^2-\mu)} \, \mathcal{H}'_\nu(x)+ \frac{4(x^2-\mu)^3+(6x^2-\mu)\mu}{4x^2 (x^2-\mu)^2} \, \mathcal{H}_\nu (x)=0. $$

Consider Sonin's function
$$ S(x)=
{\mathcal{H}}_{\nu}^2(x)+\frac{4x^2 (x^2-\mu)^2 }{4(x^2-\mu)^3+(6x^2-\mu)\mu} \, {\mathcal{H}'}_{\nu}^2(x), $$
then ${\mathcal{H}}_{\nu}^2 (x)\le S(x)$ for $x > \sqrt{\mu} >0.$

One finds
$$ S'(x)=\frac{24\mu x^3 (x^2-\mu)(4x^2+\mu)}{\left(4(x^2-\mu)^3+(6x^2-\mu)\mu \right)^2} \, {\mathcal{H}'}_{\nu}^2(x) \ge 0,$$
hence
$$|\mathcal{H}_{\nu}^2(x)|<\sqrt{\lim_{x \rightarrow \infty} S(x)}\, .$$
Using
$$J'_\nu(x)=\frac{J_{\nu-1}(x)-J_{\nu+1}(x)}{2} $$
and the asymptotic formula
$$J_\nu(x) \sim \sqrt{\frac{2}{\pi x}} \, \cos{(x-\frac{(2\nu+1)\pi}{4})} \, , $$
after some calculations one finds
$$ |\mathcal{H}_\nu(x)|< \sqrt{\frac{2}{\pi}} \, .$$
Since $\mathcal{H}_\nu^2(x)=S(x)$ at all local maxima the constant $\sqrt{\frac{2}{\pi}} $ is sharp.

\noindent
{\it Case 2:} $0 <x<\sqrt{\mu} , \; \;  \frac{1}{2} \le \nu \le 3.$
By (\ref{watsineq})  and $\nu >\frac{1}{2}$ we have
$$ \mathcal{H}_\nu(x)=( \mu -x^2 )^{1/4} J_\nu (x) \le
 \frac{( \mu -x^2 )^{1/4} x^\nu}{2^ \nu \Gamma(\nu+1)}\, .$$
 The maximum of the last expression is attained for $x= \sqrt{\nu^2-\nu/2} \, ,$
yielding
$$\mathcal{H}_\nu(x) \le \frac{\nu^{\nu/2} (2\nu-1)^{\nu/2+1/4}}{2^{(3\nu+1)/2} \, \Gamma(\nu+1)} :=\frac{f(\nu)}{\Gamma(\nu+1)} \, .$$
For $\nu >\frac{1}{2}$ the function $\Gamma( \nu+1)$ is increasing and it is easy to check that $f(\nu)$ is also increasing.
Hence for $\frac{1}{2} \le a \le \nu \le b, \; \; a \ne b,$ we have the following estimate
$$\mathcal{H}_\nu(x) < \frac{f(b)}{\Gamma (a+1)} \, .$$

This yields
$$\mathcal{H}_\nu(x) < \frac{3^{5/4}}{4 \sqrt{2} \, \Gamma(\frac{3}{2})}  = \frac{3^{5/4}}{2 \sqrt{2 \pi} }< \sqrt{\frac{2}{\pi}} \,, \; \; \; \frac{1}{2} \le \nu \le 2;$$
$$\mathcal{H}_\nu(x) < \frac{1.52}{\Gamma(3) } < \sqrt{\frac{2}{\pi}} \,, \; \; \;
2 \le \nu \le 2.6;$$
$$\mathcal{H}_\nu(x) < \frac{2.72}{\Gamma(3.6) } < \sqrt{\frac{2}{\pi}} \,, \; \; \;
2.6 \le \nu \le 3.$$

\noindent
{\it Case 3:} $0 <x<\sqrt{\mu} \,, \; \; \nu \ge 3.$
Inequality (\ref{ozmonj}) yields
$$ \mathcal{H}_\nu(x) <  \frac{2^{1/3} x^\nu (\mu-x^2)^{1/4}}{3^{2/3} \Gamma (\frac{2}{3}) \nu^{\nu+1/3}} \,
\exp \left(\frac{\nu^2-x^2}{2\nu+1} \right).$$
Let $r=(\nu/2)^{1/3},$
 by Lemma \ref{leftmax} we can set $$x= \nu \sqrt{1-(2\nu)^{-2/3} z} = \frac{r^2}{2} \,
 \sqrt{r^2-z}
 \,, \; \; \;  r^{-4} <z<1.$$
This gives
$$\mathcal{H}_\nu(x)<A \left( 1-\frac{z}{r^2} \right)^{r^3/4} \, (z \,r^4 -1)^{1/4} r^{-1}\,
\exp \left( \frac{z \, r^4}{4r^3+4}\right):=A f(z),$$
where $A= \frac{2^{1/6}}{3^{2/3} \Gamma (\frac{2}{3})} \, .$
We find
$$\frac{f'(z)}{f(z)}= \frac{(1+r^3+r^6-2z \,r^4-z^2 r^5 )r^3}{4(1+r^2)(r^2-z)(z\, r^4-1)},
$$
where
$$ 1+r^3+r^6-2z \,r^4-z^2 r^5 >(r^2-1)(1+r)(r^3-2r^2+r-1) >0,$$
for $\nu \ge 19/7.$  Hence $f(x)$ is increasing and
$$\mathcal{H}_\nu(x) <A f(1)= A \left( 1-\frac{1}{r^2} \right)^{r^3/4} \, (r^4 -1)^{1/4} r^{-1}\,
\exp \left( \frac{ r^4}{4r^3+4}\right) <$$
$$A e^{-r/4} (1-r^{-4})^{1/4} \exp \left( \frac{ r^4}{4r^3+4}\right) <A <\sqrt{\frac{2}{\pi}} \,.$$

This completes the proof.
\end{proof}

A similar but more complicated result can be given for $J'_\nu(x).$
\begin{theorem}
\label{thbesproizvod}
Let $\nu \ge \frac{1}{2}$ and $x \ge \nu+\frac{\sqrt{7}-1}{2^{2/3}} \, \nu^{1/3},$ then
\begin{equation}
\label{eqthbesproizvod}
\frac{x\left(4(x^2-\nu^2)^3-3x^4-10x^2 \nu^2+\nu^4 \right)^{1/4}}{x^2-\nu^2} \,| J'_\nu (x)|<
\frac{2}{\sqrt{\pi}} \, .
\end{equation}
\end{theorem}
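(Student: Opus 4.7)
The plan is to repeat the strategy of Theorem~\ref{bessel} with $J_\nu$ replaced by $J_\nu'$. First, differentiating the Bessel equation (\ref{difeqbes}) once and using it again to eliminate $J_\nu$, I would obtain a second-order linear ODE for $v(x):=J_\nu'(x)$,
$$x^{2}(x^{2}-\nu^{2})\,v''+x(x^{2}-3\nu^{2})\,v'+\bigl[(x^{2}-\nu^{2})^{2}-x^{2}-\nu^{2}\bigr]v=0,$$
call it $v''+Av'+Bv=0$. Removing the first derivative via $v=(x^{2}-\nu^{2})^{1/2}x^{-3/2}w$ gives $w''+Qw=0$ with $Q=B-A'/2-A^{2}/4$, and a direct algebraic check then yields the clean identity
$$P(x)\;=\;4x^{2}(x^{2}-\nu^{2})^{2}\,Q(x).$$
Consequently
$$\mathcal{K}_\nu(x)\;:=\;\frac{x\,P(x)^{1/4}}{x^{2}-\nu^{2}}\,J_\nu'(x)\;=\;\sqrt{2}\,Q(x)^{1/4}\,w(x)$$
is (up to the factor $\sqrt{2}$) exactly the natural WKB envelope of $J_\nu'$, and the target inequality is $|\mathcal{K}_\nu(x)|<2/\sqrt{\pi}$. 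The lower bound on $x$ in the theorem first of all ensures $P>0$, so that $\mathcal{K}_\nu$ is real.

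Next, I would write down the ODE $\mathcal{K}_\nu''+\tilde A\,\mathcal{K}_\nu'+\tilde B\,\mathcal{K}_\nu=0$ satisfied by $\mathcal{K}_\nu$, with
$$\tilde A\;=\;-\frac{Q'}{2Q},\qquad \tilde B\;=\;Q-\frac{Q''}{4Q}+\frac{5\,Q'^{2}}{16\,Q^{2}},$$
and form the Sonin function
$$S(x)\;=\;\mathcal{K}_\nu(x)^{2}\;+\;\frac{\mathcal{K}_\nu'(x)^{2}}{\tilde B(x)}.$$
Exactly as in Theorem~\ref{bessel}, the $\mathcal{K}_\nu\mathcal{K}_\nu'$ cross term in $S'$ cancels automatically, leaving
$$S'(x)\;=\;-\frac{\tilde B'+2\tilde A\tilde B}{\tilde B^{2}}\,\mathcal{K}_\nu'(x)^{2},$$
so $S'\ge0$ reduces to the single scalar inequality $\tilde B'+2\tilde A\tilde B\le0$, which after multiplying through by $16\,Q^{3}$ becomes
$$18\,Q\,Q'\,Q''\;-\;15\,Q'^{3}\;-\;4\,Q^{2}\,Q'''\;\le\;0.$$
The limit $S(\infty)$ is then easy: from $J_\nu'(x)\sim-\sqrt{2/(\pi x)}\sin(x-\omega_\nu)$, $J_\nu''(x)\sim-\sqrt{2/(\pi x)}\cos(x-\omega_\nu)$, $h(x):=xP^{1/4}/(x^{2}-\nu^{2})\sim\sqrt{2x}$, and $\tilde B\to1$, one finds $\mathcal{K}_\nu\sim(2/\sqrt{\pi})(-\sin)$, $\mathcal{K}_\nu'\sim(2/\sqrt{\pi})(-\cos)$ and $S(\infty)=4/\pi$. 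Monotonicity of $S$ together with $\mathcal{K}_\nu^{2}\le S$ then yields $\mathcal{K}_\nu^{2}\le 4/\pi$, which is the claimed bound; strict inequality follows from $\mathcal{K}_\nu'^{2}/\tilde B>0$ off isolated points, and sharpness from the above asymptotic.

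The main obstacle is to verify simultaneously $\tilde B(x)>0$ and $18QQ'Q''-15Q'^{3}-4Q^{2}Q'''\le0$ throughout $x\ge\nu+\frac{\sqrt{7}-1}{2^{2/3}}\nu^{1/3}$. Substituting $Q=P/(4x^{2}(x^{2}-\nu^{2})^{2})$ and rewriting everything in the variable $r=(x^{2}-\nu^{2})/\nu^{2}$, the inequality clears to a polynomial in $r$ with coefficients that are polynomial in $\nu^{2}$; specializing $x=\nu+s\nu^{1/3}$ and expanding in $\nu^{-2/3}$, the leading-order condition on $s$ singles out $s=(\sqrt{7}-1)/2^{2/3}$ as the smallest positive value of $s$ for which the inequality is certified, and a sub-leading analysis is then needed to guarantee that the lower-order corrections do not move the true threshold above the claimed value. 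This is the analogue here of the role played by Lemma~\ref{leftmax} in the proof of Theorem~\ref{bessel}, but appreciably heavier: $P$ is cubic rather than linear in $x^{2}-\nu^{2}$, so all three derivatives $Q',Q'',Q'''$ enter the critical inequality and $\tilde A,\tilde B$ acquire $P'$-terms that must be controlled by hand.
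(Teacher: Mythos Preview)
Your overall strategy---form the Sonin function for $z(x)=\dfrac{x\,\psi(x)^{1/4}}{x^{2}-\nu^{2}}\,J_\nu'(x)$ (your $\mathcal{K}_\nu$, with $\psi$ your $P$), show $S'\ge0$ on the stated range, and read off $S(\infty)=4/\pi$ from the asymptotics of $J_\nu,J_\nu'$---is exactly the paper's. The only difference is in how the polynomial sign conditions are verified. The paper writes $S$ and $S'$ with explicit polynomial numerator and denominator $P,Q$ in $x,\nu$, and then applies the single substitution
\[
x=y+\nu+\tfrac{\sqrt{7}-1}{2^{2/3}}\,\nu^{1/3},\qquad \nu=(2^{-1/3}+n)^{3};
\]
each of $\psi$, $P$, $Q$ becomes a polynomial in $(y,n)$ with all coefficients nonnegative, so positivity for every $y,n\ge0$ (i.e.\ every $\nu\ge\tfrac12$ and every admissible $x$) follows by inspection. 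No asymptotic analysis is needed at all.

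Your proposed verification has a genuine gap: an expansion in $\nu^{-2/3}$ with leading-order plus unspecified ``sub-leading'' corrections can at best cover $\nu$ beyond some undetermined threshold, not down to $\nu=\tfrac12$. The claim that the leading order ``singles out'' $s=(\sqrt{7}-1)/2^{2/3}$ is also not right---for $\psi>0$ alone the leading-order threshold is only $s>(3/8)^{1/3}\approx0.72$; the value $(\sqrt{7}-1)/2^{2/3}\approx1.04$ is not forced by the asymptotics of any one inequality but is simply a value for which the substitution above happens to certify all three sign conditions simultaneously. Finally, the analogy with Lemma~\ref{leftmax} is misplaced: that lemma is used in Theorem~\ref{bessel} to handle the monotonicity region $0<x<\sqrt{\mu}$, whereas Theorem~\ref{thbesproizvod} makes no claim in that region and is proved purely by Sonin monotonicity in the oscillatory range.
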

\begin{proof}
Let
$$z(x)=\frac{x \psi^{1/4}(x)}{x^2-\nu^2} \, J'_\nu (x) ,$$
where
$$\psi(x)=4(x^2-\nu^2)^3-3x^4-10x^2 \nu^2+\nu^4.$$
First notice that $\psi(x) >0$ for $\nu \ge \frac{1}{2}$ and $x \ge \nu+\frac{\sqrt{7}-1}{2^{2/3}} \, \nu^{1/3}.$
Indeed, the substitutions
\begin{equation}
\label{subst7}
 x= y+ \nu+\frac{\sqrt{7}-1}{2^{2/3} }\,\nu^{1/3} , \; \; \;
\nu=(2^{-1/3}+n)^3,
\end{equation}
transforms $\psi$ into a polynomial in $n$ and $y$ with nonegative coefficients.
Consider Sonyn's function
$$S(x)=z^2(x)+\frac{x^2 (x^2-\nu^2)^2\psi^2(x)}{Q(x)}\,z'^2 (x), $$
and its derivative
$$S'(x)=\frac{6x^3(x^2-\nu^2)^4 \psi(x)P(x)}{Q^2(x)} \, z'^2(x), $$
where
$$Q(x)=16(x^2-\nu^2)^9-4(x^2-\nu^2)^6(2\nu^4-24 \nu^2 x^2-9x^4)+$$
$$(x^2-\nu^2)^3
(\nu^8+474\nu^4 x^4+440 \nu^2 x^6+45x^8)+ $$
$$
3x^2(2\nu^{10}+29\nu^8 x^2-112 \nu^6 x^4-66 \nu^4 x^6-42 \nu^2 x^8-3 x^{10}),
$$
and
$$P(x)=$$
$$16\nu^2 (x^2-\nu^2)^6 (4x^2+\nu^2)+8(x^2-\nu^2)^3 (5\nu^8+119\nu^6 x^2+313 \nu^4 x^4+145 \nu^2 x^6+6 \nu^8)+ $$
$$17\nu^{12}+618\nu^{10}x^2+695 \nu^8 x^4+492 \nu^6 x^6+2831\nu^4 x^8-54\nu^2 x^{10}+9 x^{12}- $$
$$2\nu^2(\nu^8+51 \nu^6 x^2+93 \nu^4 x^4+177 \nu^2 x^6-18x^8) .$$
Applying (\ref{subst7}) to $P$ and $Q$ we obtain polynomials with nonnegative coefficients.
Hence, $z^2(x) \le S(x)$ for $\nu \ge \frac{1}{2}$ and $x \ge \nu+\frac{\sqrt{7}-1}{2^{2/3}} \, \nu^{1/3}.$
Finally, using the asymptotics
$$J_\nu (x) \sim \sqrt{\frac{2}{\pi x} } \, \cos (x-\omega_\nu), \; \; \;
J'_\nu (x) \sim - \sqrt{\frac{2}{\pi x} } \, \sin (x-\omega_\nu),$$
we obtain
$$z^2(x) < \lim_{x \rightarrow \infty} S(x)= \frac{4}{\pi} \, ,$$
and the result follows.
\end{proof}

We'll need one more statement of this type for the Airy function
$$Ai(-x)=\frac{\sqrt{x}}{3} \, \,\left( J_{-1/3}(\zeta ) +
 J_{1/3}(\zeta ) \right) ,$$
  where $\zeta= \frac{2x^{3/2}}{3} \, .$

\begin{lemma}
\label{ozairy}
\begin{equation}
\label{eqozairy}
 (x+c )^{1/4} Ai(-x)<\frac{9}{14} \,, \; \; \; x \ge -c,
\end{equation}
where $c=15^{1/3} \cdot 2^{-4/3}.$
Moreover , the values of all local maxima of the function $(x+c )^{1/4} Ai(-x)$
are restricted to the interval $(\frac{1}{\sqrt{\pi}}\,,\frac{9}{14}).$
\end{lemma}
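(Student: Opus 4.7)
My plan is to apply the Sonin-function method of Theorems \ref{bessel} and \ref{thbesproizvod} to the substitution $y(x)=(x+c)^{1/4}Ai(-x)$, reducing the lemma to the monotonicity properties of the associated Sonin function plus a single scalar inequality at $x=1$. Writing $\psi(x)=Ai(-x)$, the Airy equation yields $\psi''+x\psi=0$, and a direct calculation shows
\[
y''-\frac{y'}{2(x+c)}+\left(x+\frac{5}{16(x+c)^{2}}\right)y=0.
\]
The natural Sonin function is
\[
S(x)=y^{2}+\frac{16(x+c)^{2}}{5+16x(x+c)^{2}}\,y'^{\,2},
\]
and differentiation using the ODE gives
\[
S'(x)=\frac{16(x+c)\bigl[15-16c(x+c)^{2}\bigr]}{\bigl[5+16x(x+c)^{2}\bigr]^{2}}\,y'^{\,2}.
\]
Because $c^{3}=15/16$, the bracket factors as $-16cx(x+2c)$, so $S$ is increasing on $(-c,0]$ and strictly decreasing on $[0,\infty)$; the standard Airy asymptotics give $\lim_{x\to\infty}S(x)=1/\pi$. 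In particular, at every critical point $\xi_{i}$ of $y$ one has $y^{2}(\xi_{i})=S(\xi_{i})>1/\pi$, yielding the lower endpoint of the interval $(1/\sqrt{\pi},9/14)$.

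The upper bound is more delicate: the crude estimate $y^{2}\le S\le S(0)$ is insufficient, since $S(0)\approx 0.495$ numerically exceeds $(9/14)^{2}\approx 0.413$. The remedy is to push the first positive critical point $\xi_{1}$ of $y$ strictly past $x=1$ and then apply $y^{2}(\xi_{1})=S(\xi_{1})\le S(1)$. On $[-c,1]$ both factors of $y$ are positive and increasing: $(x+c)^{1/4}$ trivially, and $\psi$ because $\psi'(x)=-Ai'(-x)$ and $Ai'(z)<0$ throughout $[-1,0]$ (there $Ai''(z)=zAi(z)\le 0$ makes $Ai'$ monotonically decrease from the slightly negative value $Ai'(-1)$ to $Ai'(0)<0$). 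Hence $y$ is strictly increasing on $[-c,1]$, so $\xi_{1}>1$ and $y(x)\le y(\xi_{1})$ throughout $[-c,\xi_{1}]$; for $x\ge\xi_{1}$ the Sonin inequality $y^{2}\le S\le S(\xi_{1})$ takes over.

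Everything thus reduces to the scalar inequality $S(1)<(9/14)^{2}=81/196$, which is the main technical obstacle: numerically $S(1)\approx 0.4114$, leaving only about two thousandths of slack. Evaluating $S(1)$ is an explicit rational combination of $Ai(-1)^{2}$, $Ai(-1)Ai'(-1)$, $Ai'(-1)^{2}$ and $c$, so the verification needs rigorous three-decimal enclosures of $Ai(-1)$ and $Ai'(-1)$; these are available from the alternating Maclaurin series of $Ai$, whose partial sums admit easy geometric tails. Once this inequality is established, the three pieces fit together: monotonicity of $y$ gives $y<9/14$ on $[-c,\xi_{1}]$; the Sonin bound gives $y^{2}\le S(\xi_{1})<(9/14)^{2}$ on $[\xi_{1},\infty)$; and since $y^{2}(\xi_{i})=S(\xi_{i})$ forms a strictly decreasing sequence bounded below by $1/\pi$ and above by $S(\xi_{1})<(9/14)^{2}$, every local maximum of $y$ lies in the open interval $(1/\sqrt{\pi},9/14)$.
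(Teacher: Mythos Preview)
Your argument is correct and follows the same Sonin-function strategy as the paper: the same ODE for $y=(x+c)^{1/4}Ai(-x)$, the same $S(x)$, the same factorisation of $S'$ via $c^{3}=15/16$, and the same conclusion that $S$ decreases on $[0,\infty)$ to the limit $1/\pi$. The only difference is in the final numerical step: the paper simply evaluates $f$ at the numerically located first maximum $\xi\approx 1.1288$ and checks $f(\xi)<9/14$, whereas you first prove $\xi_{1}>1$ by monotonicity of both factors on $[-c,1]$ and then bound $y(\xi_{1})^{2}=S(\xi_{1})<S(1)<(9/14)^{2}$, trading a root-finding computation for an evaluation at the fixed point $x=1$ with essentially the same margin.
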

\begin{proof}
Consider the function
$$f(x)= (x+c)^{1/4} Ai(-x),\;\; x >-c,$$
which satisfies the following differential equation:
$$f''(x)-\frac{1}{2(x+c)}\, f'(x)+\left(x+\frac{5}{16(c+x)^2} \right)f(x)=0 .$$
The corresponding Sonin's function is
$$S(x)=f^2+\frac{f'^2}{x+\frac{5}{16(c+x)^2}} \,$$
and
$$S'(x)=- \frac{256 \, c \, x ( x+c)(x+2c)}{\left(16x(x+c)^2 +5\right)^2} \,.$$
Hence, $x=0$ is the only maximum of $S(x)$ for $x > -c$ and
$$ f(x) \le f(\xi)< \frac{9}{14} \, ,$$
where $\xi=1.12879717...,$ corresponds to the first maximum of $f(x).$
Using the asymptotic $M(-x) \sim \pi^{-1/2} x^{-1/4},$ where $M(x)$ is defined by
$Ai (x)= M(x) \sin \phi(x),$ one finds
$$\lim_{x->\infty} f(x)= \pi^{-1/2},$$ and the result follows.
\end{proof}

In the transition region the following inequality, which may be of independent interest, will be useful:
\begin{theorem}
\label{lemwron}
 For $x_1,x_2 \ge 0$ and $0 \le \nu \le \frac{1}{2},$
 \begin{equation}
\label{kernal}
\sqrt{x_1 x_2} \, \left| J_{-\nu}(x_1)J_{\nu}(x_2)-J_{-\nu}(x_2)J_{\nu}(x_1)\right| \le \frac{2}{\pi} \, \sin \pi \nu \, .
\end{equation}
\end{theorem}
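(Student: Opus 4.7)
The plan is to run the Sonin-function argument (the same one Szeg\"o used for (\ref{besszegj})) on the antisymmetric combination
$$g(x_1) := \sqrt{x_1 x_2}\left[J_{-\nu}(x_1)J_\nu(x_2) - J_{-\nu}(x_2)J_\nu(x_1)\right],$$
viewed as a function of $x_1$ with $x_2$ held fixed. Writing $v(x)=\sqrt{x}\,J_\nu(x)$ and $w(x)=\sqrt{x}\,J_{-\nu}(x)$, so that $g(x_1)=w(x_1)v(x_2)-v(x_1)w(x_2)$, both $v$ and $w$ satisfy the Liouville normal form $y''+b^2(x)y=0$ with $b^2(x)=1+(1-4\nu^2)/(4x^2)$, and hence so does $g$ as a function of $x_1$. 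The assumption $\nu\le \frac{1}{2}$ makes $b^2\ge 1$, so that $g^2\le S$ for the Sonin function $S=g'^2+b^2 g^2$, and simultaneously makes $(b^2)'\le 0$.

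First I would pin down the Wronskian. Because the normal-form ODE has no first-order term, $vw'-wv'$ is constant in $x$, and the classical Bessel Wronskian $J_\nu J'_{-\nu}-J_{-\nu}J'_\nu=-\frac{2\sin\pi\nu}{\pi x}$ immediately yields $vw'-wv'\equiv -\frac{2\sin\pi\nu}{\pi}$. Evaluating $g$ on the diagonal then produces $g(x_2)=0$ together with $g'(x_2)=v(x_2)w'(x_2)-w(x_2)v'(x_2)=-\frac{2\sin\pi\nu}{\pi}$, so $S(x_2)=\frac{4\sin^2\pi\nu}{\pi^2}$ irrespective of $x_2$.

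Next I would differentiate $S$. Using $g''=-b^2 g$ the mixed term cancels and one is left with
$$S'(x_1)=(b^2)'(x_1)\,g(x_1)^2=-\frac{1-4\nu^2}{2x_1^{3}}\,g(x_1)^2\le 0,$$
so $S$ is non-increasing on $(0,\infty)$. Combined with the diagonal value, this gives, for $x_1\ge x_2$, $g(x_1)^2\le S(x_1)\le S(x_2)=\frac{4\sin^2\pi\nu}{\pi^2}$; the case $x_1\le x_2$ follows from the antisymmetry $g(x_1,x_2)=-g(x_2,x_1)$ by rerunning the same Sonin argument in the $x_2$ variable. The boundary points $x_1 x_2=0$ are handled by continuity, since $\sqrt{x}\,J_{\pm\nu}(x)$ remains bounded at the origin for $|\nu|\le \frac{1}{2}$.

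The only delicate point is the sign of $(b^2)'$: one needs $S$ to \emph{decrease} so that its diagonal value $S(x_2)$ furnishes an upper bound at points $x_1>x_2$, and it is precisely the hypothesis $\nu\le \frac{1}{2}$ that secures this sign. The optimality of the constant $\frac{2\sin\pi\nu}{\pi}$ (visible from the asymptotic $g(x_1)\sim -\frac{2\sin\pi\nu}{\pi}\sin(x_1-x_2)$ as $x_1,x_2\to\infty$) is not required for the proof of the inequality itself.
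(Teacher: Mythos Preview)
Your argument is correct, and it shares the Sonin-function skeleton with the paper's proof but extracts the bound in a different (and somewhat cleaner) way.

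The paper also views $F(x_1,x_2)=g(x_1)$ as a solution of the normal-form Bessel equation, but it uses the \emph{other} Sonin function $S=F^2+(\partial F/\partial x_2)^2/b^2(x_2)$, which for $|\nu|\le \tfrac12$ is \emph{increasing} in $x_2$. It then sends $x_2\to\infty$, identifies the limit as $\tfrac{2x_1\sin^2\pi\nu}{\pi}\bigl(J_\nu^2(x_1)+Y_\nu^2(x_1)\bigr)$, and finishes by invoking Szeg\"o's bounds $x(J_\nu^2+Y_\nu^2)\le 2/\pi$.

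You instead take the variant $S=g'^2+b^2 g^2$, which for $|\nu|\le\tfrac12$ is \emph{decreasing} in $x_1$ and dominates $g^2$ because $b^2\ge 1$; you then evaluate $S$ on the diagonal $x_1=x_2$, where $g=0$ and $g'$ is exactly the normal-form Wronskian $-\tfrac{2\sin\pi\nu}{\pi}$. This yields $S(x_2)=\tfrac{4\sin^2\pi\nu}{\pi^2}$ at once, with no appeal to the Szeg\"o inequalities and with the constant appearing transparently as the Wronskian. The trade-off is that you need the extra observation $b^2\ge 1$ to pass from $S$ back to $g^2$, whereas the paper's $S$ majorises $F^2$ automatically; both routes are equally short.
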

\begin{proof}
The function
$$F=F(x_1,x_2)=\sqrt{x_1 x_2} \, \left( J_{-\nu}(x_1)J_{\nu}(x_2)-J_{-\nu}(x_2)J_{\nu}(x_1)\right)$$
satisfies the Bessel differential equations
\begin{equation}
\label{beseqs}
\frac{\partial^2} {\partial x_i^2} F+b(x_i)F=0, \; \; i=1,2;
\end{equation}
where
$$b(x)= 1+\frac{\frac{1}{4}-\nu^2}{x^2} >0 .$$
For $x >0$ we consider the following majorant of $F$ given by Sonin's function
$$F^2(x_1,x_2) \le S(x_1,x_2)=F^2 +\frac{\left(\frac{\partial} {\partial x_2} F \right)^2 }{b(x_2)}\, .$$
By (\ref{beseqs}) we have
$$\frac{\partial} {\partial x_2} S = - \frac{\frac{\partial } {\partial x_2}\, b(x_2)}{b^2(x_2)} \left(\frac{\partial} {\partial x_2} F \right)^2.$$
Since
$$\frac{\partial }{\partial x_2}\, b(x_2)= \frac{4\nu^2-1}{2x_2^3} <0$$
for $\nu < \frac{1}{2}\, ,$
the Sonin's function is increases in  $x_2.$
Using the asymtotics
$$ J_\nu (x)= \sqrt{\frac{2}{\pi x}}\, \cos (x-\frac{(2 \nu+1)\pi}{4} \, )+o(x^{-1/2}),$$
$$ J'_\nu (x)= -\sqrt{\frac{2}{\pi x}} \, \sin (x-\frac{(2 \nu+1)\pi}{4} \, )+o(x^{-1/2}),$$
and inequalities
(\ref{besszegj}),(\ref{besszegy}),
on taking the limit we obtain
$$F^2(x_1,x_2) \le S (x_1,x_2) \le \lim_{ x_2 \rightarrow \infty}  S (x_1,x_2)=$$
$$
\frac{2x_1}{\pi}\left(J_\nu^2(x_1)-2\cos \pi \nu J_\nu(x_1)J_{-\nu}(x_1)+ J_{-\nu}^2(x_1)\right)=$$
$$ \frac{2x_1 \sin^2 \pi \nu}{\pi}\, \left(J_\nu^2(x_1)+Y_\nu^2(x_1) \right) \le
\frac{4 \sin^2 \pi \nu}{\pi^2}\,.$$
This completes the proof.
\end{proof}

To get better numerical constants the following inequalities will be useful.
\begin{lemma}
\label{integral}
For $x \ge 0,$
\begin{equation}
\label{integralsq}
 \int_0^\infty \frac{\sin^2 t}{(t+x)^2}\, dt < \frac{1}{2 x} \, ,
\end{equation}
\begin{equation}
\label{integralmod}
 \int_0^\infty \frac{|\sin t|}{(t+x)^2}\, dt < \frac{2}{\pi x} \, .
\end{equation}
\end{lemma}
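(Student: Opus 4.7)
The plan is to reduce both estimates to a single positivity statement:
$$\int_0^\infty \frac{\cos(2kt)}{(t+x)^2}\, dt > 0 \qquad (k \in \mathbb{Z}_{\ge 1},\; x > 0). \qquad (\star)$$
For (\ref{integralsq}) I would apply the identity $\sin^2 t = \tfrac{1}{2}(1-\cos 2t)$ together with the elementary $\int_0^\infty (t+x)^{-2}dt = 1/x$ to obtain
$$\int_0^\infty \frac{\sin^2 t}{(t+x)^2}\,dt = \frac{1}{2x} - \frac{1}{2}\int_0^\infty \frac{\cos 2t}{(t+x)^2}\,dt,$$
so the desired strict inequality is exactly the $k=1$ case of $(\star)$. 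For (\ref{integralmod}) I would use the uniformly convergent Fourier expansion
$$|\sin t| = \frac{2}{\pi} - \frac{4}{\pi}\sum_{k=1}^\infty \frac{\cos(2kt)}{4k^2-1},$$
whose coefficients decay as $O(k^{-2})$. Integrating termwise against $(t+x)^{-2}$, which is legal by dominated convergence, leaves the leading contribution $2/(\pi x)$ minus a sum of positive terms by $(\star)$.

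The heart of the argument is therefore $(\star)$. Integration by parts with $dv = \cos(2kt)\,dt$ and $u = (t+x)^{-2}$ kills the boundary contributions and gives
$$\int_0^\infty \frac{\cos(2kt)}{(t+x)^2}\,dt = \frac{1}{k}\int_0^\infty \frac{\sin(2kt)}{(t+x)^3}\,dt.$$
I would then partition $[0,\infty)$ into the half-period intervals $I_j = [j\pi/(2k),(j+1)\pi/(2k)]$; on $I_j$ the factor $\sin(2kt)$ has constant sign $(-1)^j$. After translating every $I_j$ back onto $[0,\pi/(2k)]$ by $t \mapsto t - j\pi/(2k)$, the absolute integrals $a_j := \int_{I_j}|\sin(2kt)|(t+x)^{-3}\,dt$ become a strictly decreasing sequence, because the translated integrand is pointwise strictly decreasing in $j$ (the denominator $(t+x)^{-3}$ is strictly decreasing in $t$). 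The Leibniz criterion for alternating series with strictly decreasing terms then gives $\sum_{j \ge 0} (-1)^j a_j > 0$, which is $(\star)$.

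The only slightly delicate point is the exchange of summation and integration in the Fourier step, but since $\sum 1/(4k^2-1)$ converges absolutely and $(t+x)^{-2} \in L^1(0,\infty)$, a single application of dominated convergence to the partial sums of the Fourier series handles it; everything else is routine bookkeeping with integration by parts and the alternating-series test.
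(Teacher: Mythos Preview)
Your argument is correct. For (\ref{integralsq}) you follow the paper exactly: rewrite $\sin^2 t=\tfrac12(1-\cos 2t)$ and then show $\int_0^\infty \cos 2t\,(t+x)^{-2}\,dt>0$ by an alternating–series argument (your preliminary integration by parts to convert $\cos$ to $\sin$ is a harmless convenience that makes the half–period decomposition start at $0$).

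For (\ref{integralmod}) your route is genuinely different and considerably shorter. The paper proves this inequality by studying $f(x)=x\int_0^\infty|\sin t|(t+x)^{-2}\,dt$: it checks the bound directly for $x\le 3/4$ by a crude majorisation, identifies $\lim_{x\to\infty}f(x)=2/\pi$ via the series representation $\sum_k(t+x+\pi k)^{-2}=\pi^{-2}\psi'((t+x)/\pi)$, and then shows $f'(x)>0$ for $x>3/4$ using two–sided bounds for the trigamma function and a rather heavy polynomial positivity check. Your idea of expanding $|\sin t|$ in its Fourier series $\tfrac{2}{\pi}-\tfrac{4}{\pi}\sum_{k\ge 1}\cos(2kt)/(4k^2-1)$ and integrating term by term reduces the whole thing to the single positivity statement $(\star)$ already used for (\ref{integralsq}); the termwise integration is indeed justified since the Fourier coefficients are absolutely summable and $(t+x)^{-2}\in L^1$, so the partial sums are uniformly bounded and dominated convergence applies. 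What your approach buys is a uniform, almost one–line proof of both parts from the same lemma; what the paper's approach buys (at great cost) is the extra structural information that $x\int_0^\infty|\sin t|(t+x)^{-2}\,dt$ is increasing in $x$ with limit $2/\pi$, which is not visible from your argument.
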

\begin{proof}
We have
$$ \int_0^\infty \frac{\sin^2 t}{(t+x)^2}\, dt =\frac{1}{2x}-\frac{1}{2}\int_0^\infty \frac{\cos 2 t}{(t+x)^2}\, dt <\frac{1}{2x}$$
 because
 $$\int_0^\infty \frac{\cos 2 t}{(t+x)^2}\, dt =
 \sum_{k=0}^\infty \int_{\frac{\pi k}{2}}^{\frac{\pi(k+1)}{2}} \frac{\cos 2 t}{(t+x)^2}\, dt >0 $$
  is an alternating sum with decreasing terms. This proves (\ref{integralsq}).

Now we prove (\ref{integralmod}). Let
$$f(x)= x \int_0^\infty \frac{|\sin t|}{(t+x)^2}\, dt .$$
We have
$$f(x) <  \int_0^1 \frac{x \, t}{(t+x)^2}\, dt +  \int_1^\infty \frac{x \, dt}{(t+x)^2} \, dt = x \ln \frac{x+1}{x} < \frac{2}{\pi} \, , \; \; \;  x \in [0, \frac{3}{4} \,] .$$
Next, we show that
$$\lim_{x \rightarrow \infty} f(x)= \frac{2}{\pi} \, .$$
Using
$|\sin ( x+\pi k)|=\sin x,\; \; 0 \le x \le \frac{\pi}{2} \, ,$ we obtain
$$ f(x)=x \sum_{k=0}^\infty \int_{\pi k}^{\pi(k+1)} \frac{|\sin t|}{(t+x)^2} \, dt =
x \sum_{k=0}^\infty \int_0^\pi \frac{\sin t}{(t+x+\pi k)^2} \, dt=$$
$$x \int_0^\pi \sin t \sum_{k=0}^\infty \frac{1}{(t+x+\pi k)^2} \, dt=
\frac{x}{\pi^2} \int_0^\pi \psi'(\frac{t+x}{\pi})\, \sin t \, dt,$$
where $\psi(x)=\Gamma'(x)/\Gamma(x)$ is the digamma function.
Since $\psi'(x)=x^{-1}+O(x^{-2})$
for $x \rightarrow \infty$ this yields
$$f(x)= \frac{x}{\pi} \int_0^\pi \frac{\sin t}{t+x} \, dt +O(x^{-1})=\frac{x}{\pi} \int_0^\pi \frac{\sin t}{x} \, dt +O(x^{-1})= \frac{2}{\pi} +O(x^{-1}).$$
Now it is enough to show that $f(x)$ is increasing for $x >\frac{3}{4} \,.$
Using the inequalities \cite{ronning},
$$\frac{1}{1+x}+ \frac{1}{x^2} < \psi'(x) < \frac{1}{1+x}+ \frac{1}{x^2}+\frac{1}{(x+1)^2} \, ,$$
we have
$$\pi^2 f'(x)= \frac{d}{d x} \, \left( x \int_0^\pi \psi'(\frac{t+x}{\pi})\, \sin t \, dt,\right)=$$
$$\int_0^\pi \psi'(\frac{t+x}{\pi})\, \sin t \, dt +\frac{x}{\pi}\int_0^\pi \psi''(\frac{t+x}{\pi})\, \sin t \, dt = $$
$$\int_0^\pi \psi'(\frac{t+x}{\pi})\, \sin t \, dt -\frac{x}{\pi}\int_0^\pi \psi'(\frac{t+x}{\pi})\, \cos t \, dt  >$$
$$\int_0^\pi \left(\frac{\pi}{t+x+\pi}+\frac{\pi^2}{(t+x)^2} \right) \sin t \, dt \; -$$
$$
\int_0^{\pi/2} \left(\frac{x}{t+x+\pi}+\frac{\pi x}{(t+x)^2}+\frac{\pi x}{(t+x+\pi)^2} \right) \cos t \, dt \; -$$
$$\int_{\pi/2}^\pi \left(\frac{x}{t+x+\pi}+\frac{\pi x}{(t+x)^2} \right) \cos t \, dt >$$
$$\sum_{j=0}^3 \left(\frac{\pi}{x+\frac{(j+5) \pi}{4}}+\frac{\pi^2}{\left( x+\frac{(j+1) \pi}{4} \, \right)^2} \right)\int_{j\pi/4}^{(j+1)\pi/4}  \sin t \, dt \; -$$
$$
x \left(\frac{1}{x+\pi}+\frac{\pi}{x^2}+\frac{\pi}{(x+\pi)^2} \right) \int_0^{\pi/2} \cos t \, dt \; - x \left(\frac{1}{x+2\pi}+\frac{\pi}{(x+\pi)^2} \right)\int_{\pi/2}^\pi \cos t \, dt \; = $$
$$\frac{ P(x)}{2 Q(x)},$$
where
$$\pi^{-11}P(x)=$$
$$114688y^{11}+4096 (259-2 \sqrt{2} \,) y^{10} +1024 (4216-77 \sqrt{2}\,)y^9+
1536(6641-244 \sqrt{2}\,)y^8+$$
$$64(242859-16954 \sqrt{2}\,)y^7+400(40011-5002 \sqrt{2}\,)y^6+
4(2820158-591757 \sqrt{2}\,)y^5+$$
$$4(1332737-443447 \sqrt{2}\,)y^4 +(1564982-808187 \sqrt{2}\,)y^3+
5(44796-40571 \sqrt{2}\,)y^2-$$
$$3(1140+7109 \sqrt{2}\,)y-3780,$$
and
$$Q(x)=y(y+1)^2 (2y+1)^2 (4y+1)^2 (4y+3)^2(y+2)(2y+3)(4y+5)(4y+7),\; \; y=x/\pi .$$
It is easy to check that the polynomial $P(x+\frac{2}{3} \,)$ has only positive coefficients.
Hence $f'(x)>0$ for $x >2/3,$ and $f(x) < \lim_{x \rightarrow \infty}f(x)=\frac{2}{\pi} \, .$
This competes the proof.
\end{proof}
\section{Oscillatory Region}
\label{secosc}
Having at hand an upper bound on $|J_\nu(x)|$ one can estimate the difference
\begin{equation}
\label{defr}
r(x)= \sqrt{\frac{\pi x}{ 2}} \, J_\nu (x)-\cos (x-\omega_\nu),
\end{equation}
in a rather elementary way.
Notice that $r(x)$
satisfies the following differential equation
$$r''+r=\sqrt{\frac{\pi }{ 2 x^3}} \, \left(\nu^2- \frac{1}{4} \right) J_\nu (x),$$
with the general solution of the form
\begin{equation}
\label{fomular}
r(x)=c_1 \cos x+c_2 \sin x + \sqrt{\frac{\pi }{ 2 }} \, \left( \frac{1}{4}-\nu^2 \right)\int_{x}^\infty  \frac{\sin(t-x)}{t^{3/2}} \,J_\nu (t) dt.
\end{equation}
Now one has only to estimate the integral and to notice that as far as it is $o(1),$
we have $c_1=c_2=0$ by an obvious limiting argument.

In \cite{olenko} Olenko proved the inequalities which for $\nu >0$ can be written as
 $$ c_1 \nu^{7/6} \le \sup_{x \ge 0} x^{3/2}\left| J_\nu(x)-\sqrt{\frac{2}{\pi x}}  \, \cos (x-\omega_\nu )\right| \le c_2 \nu^{13/6},$$
 with explicit constants $c_1,c_2,$
 and raised the question what is the best possible exponent $\alpha$ of $\nu$ in these inequalities.
 It turns out that that the answer $ \alpha=2$ can be readily extracted from (\ref{fomular}), since
starting with a reasonably sharp approximation to the Bessel function one can iterate
it getting more and more accurate yet more complicate approximations.

\begin{theorem}
\label{difoz12}
\begin{equation}
\label{eqdifoz12}
J_\nu(x) = \sqrt{\frac{2}{\pi x}} \, \cos(x- \omega_\nu) +  \theta c \mu x^{-3/2},
\end{equation}
where
$$
c=\left\{
\begin{array}{ccc}
\left(\frac{2}{\pi }\right)^{3/2} \,  ,& x \ge 0, & |\nu| \le \frac{1}{2},\\
& & \\
 \frac{\sqrt{2}}{2} \, , & x \ge \sqrt{\mu}, & \nu > \frac{1}{2} \, ,\\
&&\\
 \frac{5 }{4} \, , & 0 <x < \sqrt{\mu}, & \nu > \frac{1}{2} \, .
\end{array}
\right.
$$
Moreover, up to the numerical factor $c$ the error term in (\ref{eqdifoz12}) is sharp. In particular, $c$ cannot be taken less than  $1/\sqrt{2\pi} \,.$
\end{theorem}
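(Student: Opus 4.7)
The plan is to build on the integral identity (\ref{fomular}) established in the discussion preceding the theorem. Since Theorem~\ref{asybes} with $\ell_1=\ell_2=1$ implies $r(x)=o(1)$ as $x\to\infty$, the homogeneous constants $c_1,c_2$ in (\ref{fomular}) must vanish, and the proof reduces to bounding
$$|r(x)|\le\sqrt{\pi/2}\,\mu\int_x^\infty\frac{|\sin(t-x)|}{t^{3/2}}\,|J_\nu(t)|\,dt$$
in each of the three regimes, and then multiplying by $\sqrt{2/(\pi x)}$ to recover the form (\ref{eqdifoz12}).

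For $|\nu|\le 1/2$, Szeg\H{o}'s inequality (\ref{besszegj}) gives $|J_\nu(t)|\le\sqrt{2/(\pi t)}$, which collapses the integrand to $|\sin(t-x)|/t^2$. After the substitution $u=t-x$, inequality (\ref{integralmod}) of Lemma~\ref{integral} yields $|r(x)|\le 2\mu/(\pi x)$, equivalent to $c=(2/\pi)^{3/2}$.

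For $\nu>1/2$ the key input is Theorem~\ref{bessel}: $|J_\nu(t)|<\sqrt{2/\pi}\,|t^2-\mu|^{-1/4}$. In Case~2 ($x\ge\sqrt{\mu}$) I would split the integration range at $t=\sqrt{2\mu}$ (if $x<\sqrt{2\mu}$). On the tail $[\max(x,\sqrt{2\mu}),\infty)$, the inequality $(t^2-\mu)^{1/4}\ge 2^{-1/4}\sqrt{t}$ brings the integrand back to $|\sin(t-x)|/t^2$-form so that (\ref{integralmod}) applies; on the near piece $[x,\sqrt{2\mu}]$ (when present), the inequality $t^2-\mu\ge 2x(t-x)$ together with $|\sin(t-x)|\le t-x$ controls the $(t-x)^{-1/4}$ singularity. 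Case~3 ($0<x<\sqrt{\mu}$) is handled by splitting at $t=\sqrt{\mu}$: the tail $(\sqrt{\mu},\infty)$ is treated as in Case~2, while on $[x,\sqrt{\mu}]$ one invokes the companion bound $|J_\nu(t)|<\sqrt{2/\pi}\,(\mu-t^2)^{-1/4}$ from Theorem~\ref{bessel} and absorbs the integrable singularity at $t=\sqrt{\mu}$ again via $|\sin(t-x)|\le t-x$.

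Sharpness follows from Theorem~\ref{asybes} with $\ell_1=\ell_2=1$, which yields
$$\sqrt{\pi x/2}\,J_\nu(x)=\cos(x-\omega_\nu)+\frac{\nu^2-1/4}{2x}\sin(x-\omega_\nu)+O(x^{-2}),$$
so evaluating along a sequence $x_n\to\infty$ with $|\sin(x_n-\omega_\nu)|\to 1$ gives $\sup_{x\ge 0}x^{3/2}\bigl|J_\nu(x)-\sqrt{2/(\pi x)}\cos(x-\omega_\nu)\bigr|\ge \mu/\sqrt{2\pi}$, forcing $c\ge 1/\sqrt{2\pi}$. The main obstacle is the numerical bookkeeping for the sharp constants $\sqrt{2}/2$ and $5/4$ in Cases~2 and~3: the crude splits sketched above yield slightly larger coefficients, and recovering the stated values requires a careful balance of the sub-integrals near the turning point $t=\sqrt{\mu}$, exploiting both the vanishing of $|\sin(t-x)|$ at $t=x$ and the integrability of the $|t^2-\mu|^{-1/4}$ singularity.
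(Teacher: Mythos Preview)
Your framework is right and Case~1 matches the paper exactly. The substantive difference is in Cases~2 and~3. The paper does \emph{not} split the range of integration; instead it applies Cauchy--Schwarz directly:
\[
\int_x^\infty \frac{|\sin(t-x)|}{t^{3/2}(t^2-\mu)^{1/4}}\,dt
\le \left(\int_x^\infty \frac{\sin^2(t-x)}{t^2}\,dt\right)^{1/2}
     \left(\int_x^\infty \frac{dt}{t\sqrt{t^2-\mu}}\right)^{1/2},
\]
then uses (\ref{integralsq}) on the first factor and $\arcsin y\le\pi y/2$ on the second to get $\mathcal I_\nu(x)\le\sqrt{\pi}/(2x)$, which is exactly $c=\sqrt{2}/2$. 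In Case~3 the same Cauchy--Schwarz trick is applied on $[x,\sqrt\mu]$, producing a factor $\sqrt{(\sqrt\mu-x)x^{-1}\mu^{-1}\ln(2\sqrt\mu/x)}$ whose maximum is then located numerically. Your splitting-at-$\sqrt{2\mu}$ approach can certainly be made to work with somewhat larger constants, as you already note; the point is that Cauchy--Schwarz is what buys the stated $\sqrt{2}/2$ and $5/4$ without further tuning.

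On sharpness your idea is sound and in fact more direct than the paper's, which bootstraps (\ref{eqdifoz12}) back into (\ref{fomular}) and evaluates the resulting oscillatory integral via the sine/cosine-integral auxiliaries $f,g$. One caution: invoking Theorem~\ref{asybes} with $\ell_1=\ell_2=1$ is only legitimate for $\nu\le 5/2$, since that theorem requires $\ell_1\ge\nu/2-1/4$. What you actually need is the standard large-$x$ expansion for fixed $\nu$ with $O(x^{-2})$ remainder, which holds for every $\nu$ but is not the content of Theorem~\ref{asybes} as stated; cite the classical asymptotic directly rather than that theorem.
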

\begin{proof}
To estimate the integral in (\ref{fomular}) for $|\nu| \le \frac{1}{2}$ we apply (\ref{integralmod}) yielding
$$\mathcal{I}_\nu (x):= \sqrt{\frac{\pi}{2}} \,\left| \int_{x}^\infty  \frac{\sin(t-x)}{t^{3/2}} \,J_\nu (t) dt \right| \le
\int_{x}^\infty  \frac{|\sin(t-x)|}{t^{2}} \; dt   < \frac{2}{\pi x}.$$
Thus
$$r(x)=c_1 \cos x+c_2 \sin x + \theta \, \frac{2\mu}{\pi x} \, .$$
For $\nu > \frac{1}{2} $ and $x > \sqrt{\mu} \,$ we use (\ref{eqbess}) and the inequality $ \arcsin x \le \frac{\pi x}{2} \, .$ This gives
$$\mathcal{I}_\nu (x) \le
\int_{x}^\infty  \frac{|\sin(t-x)|}{t^{3/2}(t^2-\mu)^{1/4}} \, dt \le\sqrt{\int_x^\infty  \frac{\sin^2 (t-x) \, dt}{t^2} \, \cdot  \int_x^\infty  \frac{dt}{t \sqrt{t^2-\mu}}} $$
$$=
\sqrt{\frac{\arcsin \frac{\sqrt{\mu}}{x}}{2x \sqrt{\mu}}} \le \frac{\sqrt{\pi}}{2 x} \, .$$
Hence in this case
$$r(x)=c_1 \cos x+c_2 \sin x + \theta \,  \frac{\sqrt{\pi} \, \mu}{2 x} \, .$$

Similarly, for $\nu > \frac{1}{2} $ and $0 < x \le \sqrt{\mu} \,,$
$$\mathcal{I}_\nu (x) \le  \int_{\sqrt{\mu}}^\infty  \frac{|\sin(t-x)|}{t^{3/2}(t^2-\mu)^{1/4}} \, dt+
\int_{x}^{\sqrt{\mu}}  \frac{|\sin(t-x)|}{t^{3/2}(\mu-t^2)^{1/4}} \, dt \le
$$
$$
\frac{1}{2} \, \sqrt{\frac{\pi}{\mu}} + \int_{x}^{\sqrt{\mu}}  \frac{|\sin(t-x)|}{t^{3/2}(\mu-t^2)^{1/4}} \, dt \le \frac{1}{2} \, \sqrt{\frac{\pi}{\mu}} +\sqrt{\int_x^{\sqrt{\mu}}  \frac{ dt}{t^2} \, \cdot  \int_x^{\sqrt{\mu}}  \frac{dt}{t \sqrt{\mu-t^2}}}=
$$
$$
\frac{1}{2} \, \sqrt{\frac{\pi}{\mu}} + \sqrt{\frac{\sqrt{\mu}-x}{\mu x} \,
\ln \frac{\sqrt{\mu}+\sqrt{\mu-x^2}}{x}} < \frac{\sqrt{\pi}}{2x}+
\frac{\sqrt{x(\sqrt{\mu}-x)\ln \frac{2\sqrt{\mu}}{x}}}{x \sqrt{\mu}} \, .
$$
An elementary investigation shows that the maximum of the function
$$x(\sqrt{\mu}-x)\ln \frac{2\sqrt{\mu}}{x} $$
is attained for $x=d \sqrt{\mu} \,, \; \; d =0.314711...,$ and does not exceed $2 \mu/5.$
Hence,
$$\mathcal{I}_\nu (x) < \frac{\sqrt{\pi}+2 \sqrt{\frac{2}{5}}}{2x} < \frac{6 \sqrt{\pi}}{7x} \, ,$$
and
$$r(x)= c_1 \cos x+c_2 \sin x + \theta \,  \frac{6 \sqrt{\pi} \, \mu}{7 x} \, .$$
Finally, since for $x \rightarrow \infty ,$
$$J_\nu (x)= \sqrt{\frac{2}{\pi x}} \, \cos (x-\omega_\nu ) +O(x^{-3/2} ),$$
that is
$\lim_{x \rightarrow \infty} r(x)=0,$ we conclude that $c_1=c_2=0$ and (\ref{eqdifoz12}) follows.

Let us show now that up to the numerical factor $c$ the error term in (\ref{eqdifoz12}) is sharp.

By (\ref{fomular}) and  (\ref{eqdifoz12}) we have
$$\mathcal{R}_\nu(x)=x^{3/2} \left| J_\nu(x)-\sqrt{\frac{2}{\pi x}}  \, \cos (x-\omega_\nu )\right|=\mu x \left| \int_x^\infty \frac{\sin(t-x)}{t^{3/2}} \, J_\nu(t) dt \right|=$$
$$
\sqrt{\frac{2}{\pi}}\,\mu x \left| \int_x^\infty \frac{\sin(t-x)\cos(t-\omega_\nu)}{t^2} \, dt \right|+ \theta c \mu x \left| \int_x^\infty \frac{\sin(t-x)}{t^3} \, dt \right|:=
 \sqrt{\frac{2}{\pi}}\,\mu x | I_1 |+  I_2.
$$
Here
$$ I_2 = \theta c \mu x  \int_x^\infty \frac{dt}{t^3}=\frac{\theta c \mu}{2x} \, .  $$
To bound $I_1$ we introduce two auxiliary functions $f$ and $g$ (see \cite[Ch. 6]{olv}), defined by
$$Si(x)= \frac{\pi}{2} -f(x)\cos x-g(x) \sin x,$$
$$Ci(x)= f(x)\sin x-g(x) \cos x,$$
with the asymptotics
$$f(x) =\frac{1}{x}+O(x^{-3}), \; \; \; g(x) =\frac{1}{x^2}+O(x^{-4}), \; \; x \rightarrow \infty.$$
Calculations yield
$$I_1= \left. Si(2z) \sin(x+\omega_\nu)+Ci(2z) \cos(x+\omega_\nu ) -\frac{\sin t \cos(z-\omega_\nu )}{z} \right|_{t=0}^\infty =$$
$$
\left. f(2z) \sin(2t+x-\omega_\nu)- g(2z) \cos(2t+x-\omega_\nu)-
\frac{\sin t \cos(z-\omega_\nu)}{z}\right|_{t=0}^\infty =
$$
$$
\frac{\sin(\omega_\nu-x)}{2x}+O(x^{-2}), \; \; \;z=x+t.
$$
Hence
$$\mathcal{R}_\nu (x)=\frac{\mu}{\sqrt{2\pi}} \,| \sin(\omega_\nu-x)| +O(x^{-1}),$$
and the result follows.
\end{proof}

Applying (\ref{eqdifoz12}) to the Airy function yields the following approximation:
\begin{corollary}
\label{corairy}
For $x >0,$
\begin{equation}
\label{ozairy1}
Ai(-x) = \frac{\cos( \zeta-\frac{\pi}{4} \,)}{\sqrt{\pi} \, x^{1/4}}+\theta \, \frac{5}{6 \sqrt{3} \, \pi^{3/2} x^{7/4}} \,;
\end{equation}
\end{corollary}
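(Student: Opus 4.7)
The plan is to apply Theorem~\ref{difoz12} to the representation
$$Ai(-x)=\frac{\sqrt{x}}{3}\bigl(J_{-1/3}(\zeta)+J_{1/3}(\zeta)\bigr),\qquad \zeta=\frac{2x^{3/2}}{3},$$
and then simplify the main term via a sum-to-product identity, collecting the two error contributions into a single bound.

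First I would note that for $\nu=\pm 1/3$ we have $|\nu|\le 1/2$, so the relevant branch of Theorem~\ref{difoz12} applies with $c=(2/\pi)^{3/2}$ and $\mu=|1/9-1/4|=5/36$. Since $\zeta>0$ throughout (as $x>0$), this gives
$$J_{\pm 1/3}(\zeta)=\sqrt{\frac{2}{\pi\zeta}}\,\cos(\zeta-\omega_{\pm 1/3})+\theta_\pm\left(\frac{2}{\pi}\right)^{3/2}\!\frac{5/36}{\zeta^{3/2}},$$
with $\omega_{1/3}=5\pi/12$ and $\omega_{-1/3}=\pi/12$.

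Next I would add the two asymptotics. By the product-to-sum formula,
$$\cos(\zeta-5\pi/12)+\cos(\zeta-\pi/12)=2\cos(\zeta-\pi/4)\cos(\pi/6)=\sqrt{3}\,\cos(\zeta-\pi/4),$$
so that, absorbing $\theta_++\theta_-$ into a single $2\theta$,
$$J_{-1/3}(\zeta)+J_{1/3}(\zeta)=\sqrt{\frac{2}{\pi\zeta}}\,\sqrt{3}\,\cos(\zeta-\pi/4)+2\theta\left(\frac{2}{\pi}\right)^{3/2}\!\frac{5/36}{\zeta^{3/2}}.$$
Substituting $\zeta=2x^{3/2}/3$ gives $\sqrt{2/(\pi\zeta)}=\sqrt{3/\pi}\,x^{-3/4}$ and $\zeta^{-3/2}=(3/2)^{3/2}x^{-9/4}$, hence multiplying by $\sqrt{x}/3$ turns the main term into $\cos(\zeta-\pi/4)/(\sqrt{\pi}\,x^{1/4})$, exactly as claimed.

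The last step is bookkeeping on the error coefficient: the prefactor is
$$\frac{\sqrt{x}}{3}\cdot 2\cdot\left(\frac{2}{\pi}\right)^{3/2}\!\frac{5}{36}\cdot\Bigl(\frac{3}{2}\Bigr)^{3/2}x^{-9/4}=\frac{10}{108}\,\Bigl(\frac{3}{\pi}\Bigr)^{3/2}x^{-7/4}=\frac{5\sqrt{3}}{18\,\pi^{3/2}}\,x^{-7/4}=\frac{5}{6\sqrt{3}\,\pi^{3/2}}\,x^{-7/4},$$
which matches (\ref{ozairy1}). The only real obstacle is this arithmetic: one must keep track of the constants $(2/\pi)^{3/2}$, $(3/2)^{3/2}$, and $5/36$, and verify that $2\cdot 5\cdot 3\sqrt{3}/(3\cdot 36)$ collapses to $5/(6\sqrt{3})$. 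Everything else is a direct substitution into Theorem~\ref{difoz12} together with the classical identity expressing $Ai(-x)$ in terms of $J_{\pm 1/3}$.
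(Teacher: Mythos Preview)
Your proposal is correct and follows exactly the route the paper intends: the corollary is stated immediately after Theorem~\ref{difoz12} with only the remark ``Applying (\ref{eqdifoz12}) to the Airy function yields the following approximation,'' and your computation fills in precisely that substitution. The arithmetic on both the main term and the error constant checks out.
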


\section{Transition Region}
\label{sectran}
Our estimates in the transition region are based on the following simple observation.
\begin{lemma}
\label{lemmain2}
Let for a real constant $q,$
$$y''(x)+q^2 x y(x)=u(x). $$
Then for $x \ge 0,$
\begin{equation}
\label{eqtrans}
y(x)=\sqrt{x} \left(c_1  J_{-1/3}(\frac{2 q x^{3/2}}{3} )+c_2  J_{1/3}(\frac{2 q x^{3/2}}{3} ) \right)+
\end{equation}
$$\theta \, q^{-1}  x^{-1/4} \int_0^x |u(t)|t^{-1/4}  dt ,$$
provided the integral exists.
\end{lemma}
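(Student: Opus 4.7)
The plan is to prove this by variation of parameters, exploiting the fact that $\sqrt{x}J_{\pm 1/3}(\tfrac{2qx^{3/2}}{3})$ form a fundamental system for the homogeneous equation and that Theorem \ref{lemwron} provides exactly the bound needed to control the resulting Green kernel.

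First I would set $y_1(x)=\sqrt{x}\,J_{-1/3}(\zeta)$ and $y_2(x)=\sqrt{x}\,J_{1/3}(\zeta)$ with $\zeta=\frac{2qx^{3/2}}{3}$ and verify (by direct substitution, using $\frac{d\zeta}{dx}=qx^{1/2}$) that both satisfy $y''+q^2xy=0$. Since the ODE has no first-derivative term, the Wronskian $W=y_1y_2'-y_1'y_2$ is constant. A short computation, using the chain rule and the classical identity $J_\nu(z)J'_{-\nu}(z)-J'_\nu(z)J_{-\nu}(z)=-\frac{2\sin(\pi\nu)}{\pi z}$ at $\nu=1/3$, gives
$$W=qx^{3/2}\bigl(J_{-1/3}(\zeta)J'_{1/3}(\zeta)-J'_{-1/3}(\zeta)J_{1/3}(\zeta)\bigr)=\frac{3\sqrt{3}}{2\pi}.$$

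Next, standard variation of parameters expresses the general solution of $y''+q^2xy=u$ as $c_1y_1+c_2y_2+y_p$ with
$$y_p(x)=\int_0^x K(x,t)\,u(t)\,dt,\qquad K(x,t)=\frac{y_2(x)y_1(t)-y_1(x)y_2(t)}{W},$$
which, after substituting $W$ and the definitions of $y_1,y_2$, becomes
$$K(x,t)=\frac{2\pi\sqrt{xt}}{3\sqrt{3}}\bigl(J_{1/3}(\zeta_x)J_{-1/3}(\zeta_t)-J_{-1/3}(\zeta_x)J_{1/3}(\zeta_t)\bigr),\qquad \zeta_s:=\frac{2qs^{3/2}}{3}.$$
The crucial step is now to bound the bracket using Theorem \ref{lemwron} with $\nu=1/3$ at the Bessel-arguments $\zeta_x,\zeta_t$:
$$\bigl|J_{1/3}(\zeta_x)J_{-1/3}(\zeta_t)-J_{-1/3}(\zeta_x)J_{1/3}(\zeta_t)\bigr|\le\frac{2\sin(\pi/3)}{\pi\sqrt{\zeta_x\zeta_t}}=\frac{\sqrt{3}}{\pi}\cdot\frac{3}{2q(xt)^{3/4}}.$$
Feeding this back into $K$, the factors of $\pi$ and $\sqrt{3}$ cancel cleanly and one gets $|K(x,t)|\le q^{-1}(xt)^{-1/4}$.

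Finally, integrating gives $|y_p(x)|\le q^{-1}x^{-1/4}\int_0^x t^{-1/4}|u(t)|\,dt$, which is exactly the $\theta$-term claimed in \eqref{eqtrans}. The only real obstacle is the bookkeeping in the third step: making sure the Wronskian constant $\frac{3\sqrt{3}}{2\pi}$ and the $\frac{2\sin(\pi\nu)}{\pi}$ constant from Theorem \ref{lemwron} combine to leave a bare $q^{-1}$ and the symmetric weight $(xt)^{-1/4}$; everything else is routine variation of parameters.
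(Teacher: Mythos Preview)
Your argument is essentially identical to the paper's: the same fundamental pair $\sqrt{x}\,J_{\pm 1/3}(\tfrac{2qx^{3/2}}{3})$, the same Wronskian $\tfrac{3\sqrt{3}}{2\pi}$, variation of parameters, and Theorem~\ref{lemwron} at $\nu=1/3$ to bound the kernel by $q^{-1}(xt)^{-1/4}$. The bookkeeping is correct and there is nothing to add.
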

\begin{proof}
Let $y_1$ and $y_2$ be two linearly independent solutions of the homogeneous equation
$$y''(x)+q^2 x y(x)=0,$$
and let
$$U(x,t)=\frac{y_1(t)y_2(x)-y_1(x)y_2(t)}{y_1(t)y'_2 (t)-y'_1(t) y_2(t)} \, .$$
Then
$$y(x)=c_1 y_1(x)+c_2 y_2(x)+\int^x U(x,t)u(t) dt, $$
and choosing
$$y_1(x)= \sqrt{x}\, J_{-1/3}(\frac{2 q x^{3/2}}{3} ), \; \; \;
y_2(x)= \sqrt{x}\, J_{1/3}(\frac{2 q x^{3/2}}{3}),$$
we find
$$y_1(t)y'_2 (t)-y'_1(t) y_2(t) =\frac{3 \sqrt{3}}{2 \pi} .$$
Applying (\ref{kernal}) we obtain
$$U(x,t)= 2 \pi \sqrt{\frac{t x}{27}} \left(
J_{-1/3}(\frac{2 q t^{3/2}}{3} ) J_{1/3}(\frac{2 q x^{3/2}}{3} )-J_{-1/3}(\frac{2 q x^{3/2}}{3} )J_{1/3}(\frac{2 q t^{3/2}}{3} )\right)\le$$

$$  q^{-1} t^{-1/4} x^{-1/4}. $$
Hence
$$\left|\int_0^x U(x,t)u(t)dt \right| \le q^{-1}x^{-1/4} \int_0^x |u(t)| t^{-1/4}  dt ,$$
and the result follows.
\end{proof}

To find the constant of integration we use the value of $J_\nu(x)$ at two points:
$x=\nu$ and at the first zero $x=j_{\nu 1}.$
The following bounds are known \cite{quwong} (see \cite{finch} for a review of recent results),
\begin{equation}
\label{firstzerbes}
 j_{\nu s} = \nu+ 2^{-1/3} a_s \nu^{1/3}+\theta^2 \, \frac{3 \cdot 2^{-2/3} a_s^2}{10} \,\nu^{-1/3} , \; \; \; \nu >0,
\end{equation}
where $a_s$ is $s^{th}$ positive zero of
the Airy function
$Ai(-x).$

In the sequel we set
$\gamma= 2^{-1/3} a_1 =1.855757...$
  Thus, we can write
\begin{equation}
\label{pribfirstzerbes}
j_{\nu 1}= \nu+\gamma \nu^{1/3}+\theta^2 \, \frac{3 \gamma^2}{10} \,\nu^{-1/3}.
\end{equation}

\begin{lemma}
\label{tailor}
\begin{equation}
\label{znachroot}
  J_\nu (\nu+\gamma \nu^{1/3}) < \frac{7}{6 \nu} \,, \; \; \nu \ge \frac{1}{2} \, .
\end{equation}
\end{lemma}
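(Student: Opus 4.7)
By (\ref{pribfirstzerbes}), the point $\nu + \gamma \nu^{1/3}$ is a low-order truncation of the asymptotic expansion of the first positive zero $j_{\nu 1}$ of $J_\nu$; more precisely,
$$0 \le j_{\nu 1} - (\nu + \gamma \nu^{1/3}) \le \frac{3 \gamma^2}{10}\,\nu^{-1/3}.$$
Since $J_\nu(j_{\nu 1}) = 0$, the mean value theorem on $[\nu + \gamma \nu^{1/3},\, j_{\nu 1}]$ gives
$$J_\nu(\nu + \gamma \nu^{1/3}) \;=\; -\bigl(j_{\nu 1} - \nu - \gamma \nu^{1/3}\bigr)\, J'_\nu(\xi)$$
for some $\xi$ in that interval, reducing the task to a sharp bound on $|J'_\nu(\xi)|$ of order $\nu^{-2/3}$.

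Such a bound is exactly what Theorem \ref{thbesproizvod} provides. Since $\gamma = 2^{-1/3} a_1 \approx 1.856$ exceeds $(\sqrt{7}-1)/2^{2/3} \approx 1.037$, the entire interval $[\nu + \gamma \nu^{1/3},\, j_{\nu 1}]$ lies in the range where that theorem applies, yielding
$$|J_\nu(\nu + \gamma \nu^{1/3})| \;\le\; \frac{3\gamma^2}{10\, \nu^{1/3}} \cdot \frac{2}{\sqrt{\pi}} \cdot \frac{\xi^2 - \nu^2}{\xi\, \bigl(4(\xi^2-\nu^2)^3 - 3\xi^4 - 10\xi^2 \nu^2 + \nu^4 \bigr)^{1/4}}.$$
Parametrising $\xi = \nu + c\, \nu^{1/3}$ with $c \in [\gamma,\, \gamma + \tfrac{3\gamma^2}{10}\nu^{-2/3}]$ turns both the numerator and the radicand into polynomials in $\nu^{2/3}$, so the desired inequality $\le 7/(6\nu)$ becomes positivity of an explicit polynomial in $\nu^{2/3}$.

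The main obstacle is that the target inequality is remarkably tight. Letting $\nu \to \infty$ with $\xi = \nu + \gamma \nu^{1/3}$, the leading-order evaluation of the right-hand side gives
$$\frac{12\, \gamma^3}{10\sqrt{\pi}\,(32 \gamma^3 - 12)^{1/4}} \cdot \frac{1}{\nu} \;\approx\; \frac{1.162}{\nu},$$
only barely below $7/(6\nu) \approx 1.167/\nu$. So the polynomial comparison affords almost no slack and the lower-order terms must be handled carefully. A convenient route is to verify monotonicity of the bound in $c$ on the allowed range (so the worst case $c = \gamma$ suffices) and to dispose of a small initial range $1/2 \le \nu \le \nu_0$ by direct numerical evaluation of $J_\nu$.
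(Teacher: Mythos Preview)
Your plan is essentially the paper's: mean-value theorem on $[\nu+\gamma\nu^{1/3},\,j_{\nu 1}]$, then Theorem~\ref{thbesproizvod} to bound $|J'_\nu|$, then monotonicity in the location parameter to reduce to the endpoint $c=\gamma$. The one execution difference is that the paper avoids your proposed numerical check for small $\nu$: writing $\delta=c$ and $\epsilon=\delta\nu^{-2/3}$, it shows the bounding factor $\phi(\epsilon,\delta)$ is decreasing in $\epsilon$ as well (on $[0,7]$, with $\epsilon<6$ whenever $\nu\ge\tfrac12$), so that $\phi(\epsilon,\delta)<\phi(0,\gamma)$ delivers the sharp asymptotic constant uniformly and purely analytically.
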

\begin{proof}
Let $j=j_{\nu 1},$
 by (\ref{pribfirstzerbes}) we have
$$ J_\nu (\nu+\gamma \nu^{1/3})=- \theta^2 \, \frac{3 \gamma^2}{10} \,\nu^{-1/3} J'_\nu (\nu+\gamma \nu^{1/3}+\theta_1^2 \, \frac{3 \gamma^2}{10} \,\nu^{-1/3}).$$
Setting
$$ \gamma \nu^{1/3}+\theta_1^2 \, \frac{3 \gamma^2}{10} \,\nu^{-1/3}:=\delta \nu^{1/3}, \;\;\;
\nu=(\delta/\epsilon)^{3/2},$$
by (\ref{eqthbesproizvod}) and $\gamma > \frac{\sqrt{7}-1}{2^{2/3}}$ we obtain
$$|J'_\nu (\nu+\delta \nu^{1/3}|< \frac{2 \epsilon}{\sqrt{\pi}} \,\phi(\epsilon, \delta),$$
where
$$\phi(\epsilon, \delta)= \frac{2+\epsilon}{1+ \epsilon}  \, \left(
4(8\delta^3-3)+16(3 \delta^3-2) \epsilon+4(6 \delta^3-7) \epsilon^2+4(\delta^3-3) \epsilon^3-3 \epsilon^4 \right)^{-1/4}.$$
Notice that $\phi(\epsilon, \delta) <\phi(\epsilon, \gamma)$
since
$\delta > \gamma$
and
$$\frac{\partial}{\partial \delta}\, \phi(\epsilon, \delta)= - \frac{3 \delta^2 (1+\epsilon)^4}{2+\epsilon} \,\phi^5(\epsilon, \delta) <0.$$
Moreover,
$$ \epsilon = \delta \nu^{-2/3}= \gamma \nu^{-2/3}+\frac{3 \gamma^2}{10}\, \nu^{-4/3}<6,$$
whereas
$$\frac{\partial}{\partial \epsilon}\, \phi(\epsilon, \gamma)<0, \;\;\; 0 \le \epsilon \le 7. $$
Thus, $\phi(\epsilon, \delta) <\phi(0, \gamma),$ and we get
$$|J'_\nu (\nu+\delta \nu^{1/3}|< \frac{2 \epsilon}{\sqrt{\pi}} \,\phi(0, \gamma)=\frac{2^{3/2} \gamma}{\sqrt{\pi}\,
(8\gamma-3)^{1/4}} \, \nu^{-2/3},$$
and the result follows.
\end{proof}

Applying Lemma \ref{lemmain2} to $J_\nu$ we obtain
\begin{theorem}
\label{thbessel}
Let $\nu \ge \frac{1}{2} \, ,$ and
let $x=\nu+\nu^{1/3} z, $ then
\begin{equation}
\label{eqbes2}
J_\nu(x)= \frac{2^{1/3}}{ \sqrt{\nu^{2/3}+z} }\, Ai(-2^{1/3} z) +\theta \frac{ 23  \max\{1,z^{9/4}\} }{2 \nu^{2/3} \sqrt{\nu^{2/3}+z}} \,.
\end{equation}
\end{theorem}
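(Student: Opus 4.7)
The plan is to reduce Bessel's equation (\ref{difeqbes}) to an Airy-type inhomogeneous ODE in the variable $z$, apply Lemma \ref{lemmain2}, and then use the a priori bound from Theorem \ref{bessel} together with the explicit values supplied by (\ref{besnunu}) and Lemma \ref{tailor} to pin the error and the integration constants.

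First I would substitute $x = \nu + \nu^{1/3} z$ and write $J_\nu(x) = X^{-1/2} w(z)$ with $X = \nu + \nu^{1/3} z$. A direct calculation (completely analogous to the WKB reduction described in Section \ref{secprel}) eliminates the first-derivative term in (\ref{difeqbes}) and yields
\[
w''(z) + 2z\, w(z) = -R(z)\, w(z),\qquad R(z) = \frac{\nu^{-4/3}}{4(1+\nu^{-2/3}z)^2} - \frac{\nu^{-2/3}z^2\bigl(3+2\nu^{-2/3}z\bigr)}{(1+\nu^{-2/3}z)^2},
\]
so in the transition regime $\nu^{-2/3}|z|\lesssim 1$ one has $|R(z)|\le 3\nu^{-2/3}z^2 + \tfrac14\nu^{-4/3}$. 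I then apply Lemma \ref{lemmain2} with $q=\sqrt 2$ and $u=-Rw$; the Bessel arguments $\zeta = \tfrac{2\sqrt 2}{3}z^{3/2} = \tfrac{2}{3}(2^{1/3}z)^{3/2}$ match exactly what the identity $Ai(-y)=\tfrac{\sqrt y}{3}\bigl(J_{-1/3}(\tfrac{2y^{3/2}}{3})+J_{1/3}(\tfrac{2y^{3/2}}{3})\bigr)$ requires, so the homogeneous basis $\sqrt z\, J_{\pm 1/3}(\zeta)$ recombines into the span of $Ai(-2^{1/3}z)$ and $Bi(-2^{1/3}z)$, and the lemma supplies the explicit error
\[
E(z) = \theta\,2^{-1/2}\,z^{-1/4}\int_0^z |R(t)|\,|w(t)|\, t^{-1/4}\, dt.
\]

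To fix the two integration constants I would match at two explicit points. At $z=0$ the value $w(0)=\nu^{1/2}J_\nu(\nu)$ is known by (\ref{besnunu}), and since $Ai(0)=1/(3^{2/3}\Gamma(\tfrac23))$ the advertised leading term $\frac{2^{1/3}}{\sqrt{\nu^{2/3}+z}}Ai(-2^{1/3}z)$ evaluated at $z=0$ reproduces $J_\nu(\nu)$ up to $O(\nu^{-4/3})$, so the $Ai$-coefficient equals $2^{1/3}\nu^{1/6}$ up to that error. At $z=\gamma=2^{-1/3}a_1$, one has $Ai(-2^{1/3}\gamma)=Ai(-a_1)=0$; combining (\ref{pribfirstzerbes}) with Lemma \ref{tailor} gives $|J_\nu(\nu+\gamma\nu^{1/3})|\le 7/(6\nu)$, and since $Bi(-a_1)\neq 0$ this forces the $Bi$-coefficient to be $O(\nu^{-5/6})$. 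After multiplying by $X^{-1/2}=\nu^{-1/6}/\sqrt{\nu^{2/3}+z}$ the homogeneous contribution becomes precisely $2^{1/3}Ai(-2^{1/3}z)/\sqrt{\nu^{2/3}+z}$ up to a controlled remainder.

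The main obstacle is the quantitative bookkeeping that delivers the explicit constant $\tfrac{23}{2}$ together with the uniform factor $\max\{1,z^{9/4}\}/(\nu^{2/3}\sqrt{\nu^{2/3}+z})$. The key input for bounding $E(z)$ is Theorem \ref{bessel}, which gives $|w(t)|=X^{1/2}|J_\nu(X)|\le\sqrt{2/\pi}\,X^{1/2}|X^2-\mu|^{-1/4}\lesssim \nu^{1/6}t^{-1/4}(1+\nu^{-2/3}t)^{1/2}$ after using $X^2-\mu\ge 2\nu^{4/3}t(1+O(\nu^{-2/3}t))$; combined with $|R(t)|\lesssim \nu^{-2/3}t^2$ the integrand is of order $\nu^{-1/2}t^{3/2}$, yielding $E(z)=O(\nu^{-1/2}z^{9/4})$ after the $z^{-1/4}$ prefactor. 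The hard part is executing this estimate with completely explicit constants in each of three subregimes: (a) $0<z\le 1$, where the $\max$ equals $1$ and the dominant contribution is the $O(\nu^{-1})$ error from Step~3; (b) $1\le z\le \nu^{2/3}$, where the Airy reduction is sharp and $E(z)$ is the main term; and (c) $z\ge \nu^{2/3}$, where $|R|=O(\nu^{-2/3}z^2)$ ceases to be small and one must either truncate the integral at a chosen cutoff or patch with the oscillatory asymptotic established in Section \ref{secosc}. For $z<0$ the same outline applies, with Theorem \ref{monotonoz} replacing Theorem \ref{bessel} and Lemma \ref{lemmain2} used in its mirror form.
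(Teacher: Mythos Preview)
Your outline is essentially the paper's own proof: the same substitution $y(z)=\sqrt{x}\,J_\nu(x)$, the same inhomogeneous Airy equation (your $-R(z)$ coincides with the paper's $\tfrac{8z^3+12\nu^{2/3}z^2-1}{4(\nu^{2/3}+z)^2}$), the same appeal to Lemma~\ref{lemmain2} with $q=\sqrt2$, Theorem~\ref{bessel} to bound $|y(t)|$ in the error integral, and the same two matching points $z=0$ via (\ref{besnunu}) and $z=\gamma$ via Lemma~\ref{tailor}. The paper simply carries the integral estimate through directly and then collects the pieces; your subregime split (a)--(c), the suggested patching with the oscillatory expansion for large $z$, and the $z<0$ discussion are unnecessary detours that the paper does not take.
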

\begin{proof}
Consider the function
$$y(z)= \sqrt{\nu+\nu^{1/3} z} \, J_\nu (\nu+\nu^{1/3} z) ,$$
which, as easy to check, satisfies the differential equation
$$y''(z)+2z y(z)= \frac{8z^3+12 \nu^{2/3} z^2-1}{4(\nu^{2/3}+z)^2} \, y(z). $$
Lemmas \ref{lemmain2} and \ref{bessel} yield
\begin{equation}
\label{asympty}
y(z)= \sqrt{z} \left(c_1 J_{-1/3} ( \sqrt{2} \,\zeta)+
c_2 J_{1/3} ( \sqrt{2} \,\zeta)\right) +\theta R(z),
\end{equation}
where $\zeta=\frac{2z^{3/2}}{3}\, ,$ and
$$z^{1/4} R(z)= \frac{1}{ 4\sqrt{2}  } \int_0^z \left| \frac{8t^3+12 \nu^{2/3} t^2-1}{(\nu^{2/3}+t)^2 t^{1/4}}
 \, y(t)\right| dt \le $$
$$\frac{1}{4\sqrt{2} } \int_0^z  \frac{|8t^3+12 \nu^{2/3} t^2-1|}{(\nu^{2/3}+t)^{3/2} t^{1/4}} \cdot (x^2-\mu)^{-1/4} \sqrt{\frac{2 x}{\pi}} \, dt = $$
$$\frac{\nu^{1/6}}{2\sqrt{2 \pi} } \int_0^z
\frac{|8t^3+12 \nu^{2/3} t^2-1|}{(\nu^{2/3}+t)^{3/2} (4 \nu^{2/3} t^2+8 \nu^{4/3} t+1)^{1/4}t^{1/4}} \, dt <$$
$$\frac{1}{\sqrt{\pi}  } \, \max \left\{\int_0^z
\frac{(3\nu^{2/3}+2t)t^{3/2} dt}{(\nu^{2/3}+t)^{3/2}(2 \nu^{2/3}+t)^{1/4}} \,,
 \int_0^z
\frac{dt }{4 \sqrt{t} \,(\nu^{2/3}+t)^{3/2}(2 \nu^{2/3}+t)^{1/4}} \right\} <$$
$$
\frac{1}{\sqrt{\pi}  } \, \max \left\{\int_0^z
\frac{(3\nu^{2/3}+2t)t^{3/2} dt}{(\nu^{2/3}+t)^{3/2}(2 \nu^{2/3}+t)^{1/4}} \,,
 \int_0^z
\frac{dt }{4 \cdot 2^{1/4} \nu^{7/6} \sqrt{t} } \right\}.
$$
For $t >0$ the function
$$\frac{3\nu^{2/3}+2t }{(\nu^{2/3}+t)^{3/2}(2 \nu^{2/3}+t)^{1/4}} $$
is decreasing in $t$ and therefore is less than $3 \cdot 2^{-1/4} \nu^{-1/2},$ its value at $t=0$. Thus, we obtain
\begin{equation}
\label{esimaterr}
 R(z) <
  \frac{1}{\sqrt{\pi} \, z^{1/4}} \max \left\{ \int_0^z
 \frac{3t^{3/2}}{2^{1/4}  \sqrt{\nu}} \; dt , \frac{\sqrt{z}}{2^{5/4} \nu^{7/6}}\right\}
  =
  \end{equation}
  $$\frac{z^{3/4}}{2^{5/4} \sqrt{\pi \nu}} \, \max \left\{\frac{6}{5} \, z^{3/2} , \nu^{-2/3} \right\} .$$

It is left to find the constants $c_1,c_2.$
For $x=\nu,$ that is for $z=0,$ we have $R(z)=0,$ and comparing (\ref{asympty}) with (\ref{besnunu}) we get
$$y(0)=\lim_{z \rightarrow 0} \sqrt{z} \left(c_1 J_{-1/3} ( \sqrt{2} \,\zeta)+
c_2 J_{1/3} ( \sqrt{2} \,\zeta)\right)= c_1 \lim_{z \rightarrow 0}
 \sqrt{z} \cdot \frac{3^{1/3}}{2^{1/6} \Gamma(\frac{2}{3} \,) \sqrt{z}}=$$
 $$
 \frac{3^{1/3} c_1}{2^{1/6} \Gamma(\frac{2}{3} \,)} =  \sqrt{\nu} J_\nu(\nu)=
 \frac{2^{1/3} \sqrt{\nu}}{3^{2/3}\Gamma(\frac{2}{3} \,) (\nu+\theta^2 \alpha)^{1/3}} \, ,$$
 hence
 $$c_1= \frac{\sqrt{2\nu}}{3(\nu+\theta^2 \alpha)^{1/3}} =
 \frac{\sqrt{2} \, \nu^{1/6}}{3} \left(1-\theta^2
 \frac{ \alpha}{3 \nu} \right). $$

Thus, using this and setting $c_3=c_2-c_1,$ we can write
\begin{equation}
\label{firstc}
y(z)= 2^{1/3} \nu^{1/6}\, Ai(-2^{1/3} z) -
\end{equation}
$$ \frac{\theta_1^2}{25 \nu^{5/6}} \, Ai(-2^{1/3} z) +
c_3 \sqrt{z} \, J_{1/3} ( \sqrt{2} \,\zeta)+\theta_2 R(z) :=2^{1/3} \nu^{1/6}\, Ai(-2^{1/3} z)+\mathcal{R}.
$$
Now $Ai(-2^{1/3} \gamma)=0$  and by Lemma \ref{tailor} we have
$$\frac{7 \sqrt{\nu+\gamma \nu^{1/3}}}{6 \nu}  > y(\nu+\gamma \nu^{1/3})=
c_3 \sqrt{\gamma} \, J_{1/3} ( \frac{(2\gamma)^{3/2}}{3})+\theta_2 R(\gamma)>
\frac{7}{20} \, c_3 - \frac{7}{3 \sqrt{\nu }} \,,
$$
yielding
$$
c_3 < \frac{10}{3 \sqrt{\nu}} \, (2+\sqrt{1+\gamma \nu^{-2/3}}) < \frac{40}{3 \sqrt{\nu}} \,.
$$
On the other hand,
$$ 0 < y(\nu+\gamma \nu^{1/3})<c_3 \sqrt{\gamma} \, J_{1/3} ( \frac{(2\gamma)^{3/2}}{3})+
R(\gamma) ,$$
giving
 $c_3 > -7/\sqrt{\nu}  \, .$

Inequality (\ref{eqozairy} ) gives
$$|Ai(-2^{1/3} z)| < \frac{9}{7 \cdot 2^{7/12}(4z+30^{1/3})^{1/4}}\,,
$$
and using (\ref{watsineq}) and (\ref{besszegj}) we have
$$\sqrt{z} \, |J_{1/3} (\sqrt{2} \, \zeta) | <\min \{\frac{2^{1/6} z}{3^{1/3}
\Gamma(\frac{4}{3})} \,, \frac{\sqrt{3}}{2^{1/4} \sqrt{\pi} \, z^{1/4}}\} \, .$$
Combining these estimates with (\ref{esimaterr}) after some straightforward calculations one finds
$$|\mathcal{R}|< \frac{ 23  \max\{1,z^{9/4}\} }{2 \sqrt{\nu}} \, ,$$
and (\ref{eqbes2}) follows.
\end{proof}


\section{Sharper asymptotics}
\label{secanother}
The classical asymptotic given by (\ref{eqdifoz12}) does not makes much sense  for $x = O(\mu)$
when the main term and the error are of the same order. Here using formula (\ref{eqwkb}) we derive a different asymptotic expression with much smaller error term. It also leads to
very sharp approximation of the Airy function $Ai(-x).$

We'll need a few lemmas given in \cite{kras10}.
\begin{lemma}
\label{lemmain1} Let $f(x)$ satisfy the differential equation
$$f''(x)+b^2(x)f(x)=0,$$
where $b(x)>0$ and $b''(x)$  exists on an interval $\mathcal{I}.$\\
Let $g(x)=\sqrt{b(x)} \, f(x),$\\
then for $x \in I,$ provided the integral exists,
\begin{equation}
\label{eqozosc}
g(x)=c_1 \cos{\mathcal{B}(x)}+c_2 \sin{\mathcal{B}(x)} + \theta \int_a^x
 \left| \frac{3 b'^2(t)-2b(t)b''(t)}{4b^3(t)} \, g(x) \right| dt,
\end{equation}
where
$\mathcal{B}(x)=\int^x b(t)dt,$ $a \in I$ is arbitrary and $|\theta|\le 1.$
\end{lemma}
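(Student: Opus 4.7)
The plan is to substitute $g(x) = \sqrt{b(x)}\,f(x)$ into the ODE for $f$ and recognize the resulting equation as a perturbation of the constant-coefficient equation whose two independent solutions are $\cos\mathcal{B}(x)$ and $\sin\mathcal{B}(x)$. Once the perturbation is isolated, variation of parameters converts the problem into a Volterra integral equation, and bounding the sine kernel by $1$ immediately produces the stated error term.

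First, I would compute $g'$ and $g''$ directly from $g = b^{1/2} f$ and substitute into $f''+b^2 f = 0$. A short calculation recovers equation (\ref{epsileq}),
$$g'' - \frac{b'}{b}\,g' + b^2\bigl(1+\epsilon(x)\bigr)\, g = 0, \qquad \epsilon(x) = \frac{3 b'^2 - 2 b\, b''}{4 b^4},$$
which I would rewrite as the inhomogeneous equation
$$g'' - \frac{b'}{b}\,g' + b^2\, g = -\,\epsilon(x)\, b^2(x)\, g(x),$$
treating the right-hand side as a forcing term depending on the unknown.

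Next, I would verify that $y_1(x) = \cos\mathcal{B}(x)$ and $y_2(x) = \sin\mathcal{B}(x)$ both solve the homogeneous left-hand side. Since $y_1' = -b\sin\mathcal{B}$ and $y_1'' = -b'\sin\mathcal{B} - b^2\cos\mathcal{B}$, direct substitution shows the homogeneous relation holds, and analogously for $y_2$; their Wronskian is $W = y_1 y_2' - y_1' y_2 = b(x)$. Standard variation of parameters, combined with the identity $\sin\mathcal{B}(x)\cos\mathcal{B}(t) - \cos\mathcal{B}(x)\sin\mathcal{B}(t) = \sin\bigl(\mathcal{B}(x)-\mathcal{B}(t)\bigr)$, then yields
$$g(x) = c_1 \cos\mathcal{B}(x) + c_2 \sin\mathcal{B}(x) - \int_a^x \epsilon(t)\, b(t)\, \sin\!\bigl(\mathcal{B}(x)-\mathcal{B}(t)\bigr)\, g(t)\, dt,$$
which is precisely equation (\ref{eqwkb}) from the preliminaries.

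Finally, I would bound $|\sin(\mathcal{B}(x)-\mathcal{B}(t))| \le 1$ and absorb the resulting signed quantity into a single $\theta$ with $|\theta|\le 1$, using $\epsilon(t)\,b(t) = (3b'^2 - 2bb'')/(4b^3)$, to arrive at (\ref{eqozosc}). There is no genuine obstacle here: the argument reduces to one substitution, one direct check that $\cos\mathcal{B}$ and $\sin\mathcal{B}$ solve the reduced equation, and a Green's-function estimate. The only step requiring any care is sign bookkeeping in the variation-of-parameters formula, so that the sine kernel $\sin(\mathcal{B}(x)-\mathcal{B}(t))$ emerges with the correct coefficient before the absolute-value bound is applied.
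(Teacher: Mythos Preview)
Your proposal is correct and follows essentially the same route as the paper's own proof: derive the second-order ODE for $g=\sqrt{b}\,f$, identify $\cos\mathcal{B}$ and $\sin\mathcal{B}$ as a fundamental pair for the unperturbed equation, apply variation of parameters to obtain the sine-kernel integral representation, and then bound $|\sin(\mathcal{B}(x)-\mathcal{B}(t))|\le 1$. The only cosmetic difference is that the paper uses the alternative normalisation $\epsilon=(2bb''-3b'^2)/(4b^2)$ in its proof, whereas you work with the $\epsilon$ of Section~\ref{secprel}; the resulting integrands agree.
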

\begin{proof}
Observe that $g(x)$ satisfies the equation
\begin{equation}
\label{epsileq1}
g''-\frac{b'}{b} \, g' + g \left(b^2 -\epsilon \right)=0, \; \; \;\epsilon= \epsilon(x)=\frac{2b b''-3b'^2}{4b^2} \, .
\end{equation}
The solution of the corresponding homogeneous equation
$$g_0''-\frac{b'}{b} \, g_0' +b^2 g_0=0$$
is
$$g_0=c_1\sin{ \mathcal{B}(x)}+ c_2\cos{ \mathcal{B}(x)}.$$
Solving formally (\ref{epsileq1}) as a nonhomogeneous equation with the right hand side
$\epsilon(x)g(x)$ we get
$$ g(x)=g_0(x)+\int_a^x \frac{\sin{( \mathcal{B}(x)-\mathcal{B}(t))}}{b(t)} \, \epsilon (t) g(t)dt=$$
$$
g_0(x)+\theta \int_a^x \left|\frac{\sin{( \mathcal{B}(x)-\mathcal{B}(t))}}{b(t)} \, \epsilon (t) g(t)\right|dt =$$
$$g_0(x)+\theta \int_a^x
\left| \frac{3 b'^2(t)-2b(t)b''(t)}{4b^3(t)} \, g(x)\right| dt.
$$
\end{proof}

The normal form of differential equation (\ref{difeqbes}) is

$$f''+(1-\frac{\nu^2-\frac{1}{4}}{x^2}) f=0, \; \;
f=\sqrt{x} \, J_\nu (x).$$
Thus, for
$x >  \sqrt{\max\{0,\nu^2-\frac{1}{4}\, \}}$
we have
$$b(x)=\frac{\sqrt{x^2-\nu^2+\frac{1}{4}}}{x} \, ,$$
$$\mathcal{B}(x)=  \left\{
\begin{array}{cc}
\sqrt{x^2+\mu}+ \sqrt{\mu} \, \ln { \frac{x}{\sqrt{\mu} +\sqrt{\mu+x^2}} \,}  , & |x|\le \frac{1}{2} \, ,\\
&\\
\sqrt{x^2-\mu}+ \sqrt{\mu} \, \arcsin{ \frac{\sqrt{\mu}}{x}} \, , & x \ge \frac{1}{2}\, ,
\end{array}
\right.
$$
and
$$g(x)= (x^2-\nu^2+\frac{1}{4}\,)^{1/4} \, J_\nu (x). $$

\begin{theorem}
\label{thbesse0}
For $|\nu |\le \frac{1}{2}$ and $x >0,$
\begin{equation}
\label{eqbes0}
J_\nu(x)=
\sqrt{\frac{2}{\pi}} \, (x^2+\mu)^{-1/4} \cos{ \left(\mathcal{B}(x) - \omega_\nu \right) } + \theta \frac{\mu}{\sqrt{2 \pi x} \,(x^2+\mu)^{3/2} } \, .
\end{equation}
For $|\nu | > \frac{1}{2}$ and $x >\mu,$
\begin{equation}
\label{eqbes1}
J_\nu(x)=
\sqrt{\frac{2}{\pi}} \, (x^2-\mu)^{-1/4} \cos{\left(\mathcal{B}(x) - \omega_\nu \right)} + \theta  \frac{13 \mu}{12 \sqrt{2\pi} \, (x^2-\mu)^{7/4}} \, .
\end{equation}
\end{theorem}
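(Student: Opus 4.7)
The plan is to apply Lemma \ref{lemmain1} directly to $g(x)=(x^2-\nu^2+\frac{1}{4})^{1/4}J_\nu(x)=\sqrt{b(x)}\,\sqrt{x}\,J_\nu(x)$, which is exactly the weighted function to which the lemma specializes when $f=\sqrt{x}\,J_\nu$. Taking the base point $a=\infty$ is legitimate since the integrand will decay like $t^{-4}$ in either regime. The constants $c_1,c_2$ in the representation are then fixed by matching with the classical asymptotic $J_\nu(x)\sim\sqrt{2/(\pi x)}\cos(x-\omega_\nu)$: a short Taylor expansion of $\sqrt{x^2\mp\mu}$ together with $\arcsin(\sqrt{\mu}/x)$ (or the corresponding logarithm for $|\nu|\le\frac{1}{2}$) shows $\mathcal{B}(x)-x\to 0$ as $x\to\infty$, so once the integral remainder tends to zero, comparison of the coefficients of $\cos x$ and $\sin x$ forces $c_1=\sqrt{2/\pi}\cos\omega_\nu$ and $c_2=\sqrt{2/\pi}\sin\omega_\nu$. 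The main term thus collapses to $\sqrt{2/\pi}\cos(\mathcal{B}(x)-\omega_\nu)$.

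Next I would compute the weight $|3b'^2-2bb''|/(4b^3)$ by differentiating $b^2=1\pm\mu/x^2$ twice; after some algebra one finds
$$\frac{|3b'^2(t)-2b(t)b''(t)|}{4b^3(t)}=\frac{\mu(6t^2\pm\mu)}{4t(t^2\pm\mu)^{5/2}},$$
with the upper sign for $|\nu|\le\frac{1}{2}$ and the lower sign for $\nu>\frac{1}{2}$. The remaining task is to bound the integral of this weight against $|g(t)|$ in each case.

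For $\nu>\frac{1}{2}$ and $x>\sqrt{\mu}$, Theorem \ref{bessel} supplies the uniform bound $|g(t)|<\sqrt{2/\pi}$, and the trivial inequality $6t^2-\mu\le 6t^2$ already reduces the integral to an elementary one,
$$\int_x^\infty\frac{\mu(6t^2-\mu)}{4t(t^2-\mu)^{5/2}}\,dt<\frac{3\mu}{2}\int_x^\infty\frac{t\,dt}{(t^2-\mu)^{5/2}}=\frac{\mu}{2(x^2-\mu)^{3/2}}.$$
Multiplying by $\sqrt{2/\pi}$ and then dividing by $(x^2-\mu)^{1/4}$ to pass from the error in $g$ to the error in $J_\nu$ yields the bound $\mu/(\sqrt{2\pi}\,(x^2-\mu)^{7/4})$, which sits comfortably inside the claimed $13\mu/(12\sqrt{2\pi}\,(x^2-\mu)^{7/4})$.

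For $|\nu|\le\frac{1}{2}$, Szeg\H{o}'s inequality (\ref{besszegj}) only gives the non-uniform bound $|g(t)|\le(t^2+\mu)^{1/4}\sqrt{2/(\pi t)}$, so the integrand becomes $\sqrt{2/\pi}\,\mu(6t^2+\mu)/(4t^{3/2}(t^2+\mu)^{9/4})$. The only genuinely clever step of the argument, which is the main obstacle, is spotting the exact antiderivative
$$\frac{d}{dt}\left[\frac{-1}{\sqrt{t}\,(t^2+\mu)^{5/4}}\right]=\frac{6t^2+\mu}{2t^{3/2}(t^2+\mu)^{9/4}},$$
which evaluates the integral to $\mu/(\sqrt{2\pi x}\,(x^2+\mu)^{5/4})$. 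Dividing by $(x^2+\mu)^{1/4}$ then gives exactly the error term in (\ref{eqbes0}). Everything else---the differentiation of $b$, the matching at infinity, and the use of the Szeg\H{o} and Sonin-type bounds---is routine.
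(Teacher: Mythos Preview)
Your proposal is correct and follows the same approach as the paper: apply Lemma~\ref{lemmain1} with the Szeg\H{o} bound (\ref{besszegj}) for $|\nu|\le\tfrac12$ and Theorem~\ref{bessel} for $\nu>\tfrac12$, then fix $c_1,c_2$ by matching with the standard asymptotic at infinity; for $|\nu|\le\tfrac12$ the paper uses exactly the antiderivative you identified. In the case $\nu>\tfrac12$ the paper instead evaluates the integral in closed form and then applies $\arcsin(\sqrt{\mu}/x)\ge\sqrt{\mu}/x$ to reach the constant $\tfrac{13}{12}$, whereas your trivial replacement $6t^2-\mu<6t^2$ is shorter and actually yields the sharper constant $1$, which certainly implies the stated bound.
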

\begin{proof}
  Since $|J_\nu(x)| \le \sqrt{\frac{2}{ \pi x}} \, $ for $|\nu| \le \frac{1}{2}\,,$ by (\ref{eqozosc}) we have
$$g(x)=g_0 + \theta \, \frac{\mu}{\sqrt{8 \pi}} \int_x^\infty \frac{6z^2+\mu}{z^{3/2} \, (z^2+\mu)^{9/4}} \, dz =g_0+ \theta \, \frac{\mu}{\sqrt{2 \pi x} \,(x^2+\mu)^{5/4} } \, .$$
Comparing this with the standard asymptotic
\begin{equation}
\label{stasymbes}
f(x) =\sqrt{x} \, J_\nu (x)=\sqrt{\frac{2}{\pi}} \, \cos{(x-\omega_\nu )}+
O(x^{-3/2}),
\end{equation}
for large $x$
one finds
$$ c_1=\sqrt{\frac{2}{\pi}} \, \sin  \omega_\nu , \; \; \; c_2= \sqrt{\frac{2}{\pi}} \, \cos \omega_\nu , $$
yielding (\ref{eqbes0}).

Similarly, for $\nu \ge \frac{1}{2} $ and $x \ge \mu, $ using (\ref{eqbess}) instead of (\ref{besszegj}), we obtain
$$g(x)=g_0 + \theta \, \frac{\mu}{\sqrt{8 \pi}} \int_x^\infty \frac{6z^2-\mu}{z \, (z^2-\mu)^{5/2}} \, dz =g_0+ \theta \, \frac{1}{\sqrt{8 \pi}} \, \left|\frac{3x^2+2 \mu}{3(x^2-\mu)^{3/2}} -\frac{\arcsin \frac{\sqrt{\mu}}{x}}{\sqrt{\mu}}\,\right|.$$
It is easy to check that the function
$$\frac{3x^2+2 \mu}{3(x^2-\mu)^{3/2}} -\frac{\arcsin \frac{\sqrt{\mu}}{x}}{\sqrt{\mu}} $$
is decreasing and positive. Therefore, by $\arcsin \frac{\sqrt{\mu}}{x} \ge  \frac{\sqrt{\mu}}{x}, \; x>0,$ we get
$$\frac{3x^2+2 \mu}{3(x^2-\mu)^{3/2}} -\frac{\arcsin \frac{\sqrt{\mu}}{x}}{\sqrt{\mu}} \le
\frac{3x^2+2 \mu}{3(x^2-\mu)^{3/2}} -\frac{1}{x} =$$
$$\frac{13 \mu}{6 (x^2-\mu)^{3/2}}\, -
\frac{(x+2 \sqrt{x^2-\mu} \,)(\sqrt{x^2-\mu}-x)^2}{2x (x^2-\mu)^{3/2}} <\frac{13 \mu}{6 (x^2-\mu)^{3/2}},$$
and
$$g(x)=g_0 + \theta \, \frac{13 \mu}{12 \sqrt{2 \pi}\, (x^2-\mu)^{3/2}}\, .$$
Comparing this with the asymptotic for large $x$ one finds
$$ c_1=\sqrt{\frac{2}{\pi}\,} \, \sin \omega_\nu, \; \; \; c_1=\sqrt{\frac{2}{\pi}\,} \, \cos \omega_\nu,$$
and (\ref{eqbes1}) follows.
\end{proof}

\begin{remark}
The approximation given by (\ref{eqbes1}) is strong enough to be matched with the solution in the transition region, as for $x=\nu+\nu^{1/3} z$ the error term is of order  $O(\nu^{-1/3} z^{-7/4}).$ This allows one to find the constants of integrations $c_1,c_2$ in Theorem \ref{thbessel} avoiding delicate claims like Lemma \ref{tailor}. We omit the details.
\end{remark}

Similarly to Corollary \ref{corairy} we obtain
\begin{corollary}
\begin{equation}
\label{airytoch}
Ai(-x)=
\end{equation}
$$ \frac{2 \sqrt{x} \cos \left(\frac{\sqrt{16x^3+5}}{6}- \frac{\sqrt{5}}{6} \,
\ln \frac{\sqrt{16x^3+5}+\sqrt{5}}{4x^{3/2}} -\frac{\pi}{4} \,\right)}{\sqrt{\pi} \, (16x^3+5)^{1/4}}
+\theta \frac{10 \sqrt{3}}{\sqrt{\pi} \, x^{1/4}(16x^3+5)^{3/2}} \,, $$
provided $ x >0.$
\end{corollary}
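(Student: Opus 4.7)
The plan is to apply Theorem \ref{thbesse0} with $\nu=\pm\tfrac{1}{3}$ to the standard identity
$$Ai(-x)=\frac{\sqrt{x}}{3}\bigl(J_{-1/3}(\zeta)+J_{1/3}(\zeta)\bigr),\qquad \zeta=\tfrac{2x^{3/2}}{3}.$$
Since $|\pm\tfrac{1}{3}|<\tfrac{1}{2}$ and the parameter $\mu=|\tfrac{1}{9}-\tfrac{1}{4}|=\tfrac{5}{36}$ is the same in both cases, formula (\ref{eqbes0}) furnishes a clean asymptotic for each of $J_{\pm 1/3}(\zeta)$ with identical envelope $(\zeta^2+\tfrac{5}{36})^{-1/4}$ and identical error bound; only the phase differs through $\omega_{\pm 1/3}=\tfrac{\pi}{4}\pm\tfrac{\pi}{6}$.

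The first step is to add the two expansions and collapse the resulting sum of cosines by sum-to-product:
$$\cos(\mathcal{B}(\zeta)-\tfrac{\pi}{4}-\tfrac{\pi}{6})+\cos(\mathcal{B}(\zeta)-\tfrac{\pi}{4}+\tfrac{\pi}{6})=2\cos(\mathcal{B}(\zeta)-\tfrac{\pi}{4})\cos\tfrac{\pi}{6}=\sqrt{3}\cos(\mathcal{B}(\zeta)-\tfrac{\pi}{4}).$$
Multiplying by $\tfrac{\sqrt{x}}{3}\sqrt{\tfrac{2}{\pi}}$ and using $\zeta^2+\tfrac{5}{36}=\tfrac{16x^3+5}{36}$ so that $(\zeta^2+\tfrac{5}{36})^{-1/4}=\sqrt{6}\,(16x^3+5)^{-1/4}$, the leading constant telescopes to $\tfrac{2\sqrt{x}}{\sqrt{\pi}}\,(16x^3+5)^{-1/4}$, which already matches the amplitude in (\ref{airytoch}).

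Next, I would rewrite $\mathcal{B}(\zeta)$ in the variable $x$. The first piece $\sqrt{\zeta^2+\mu}$ is simply $\tfrac{\sqrt{16x^3+5}}{6}$. For the logarithmic piece, substitution gives
$$\sqrt{\mu}\,\ln\frac{\zeta}{\sqrt{\mu}+\sqrt{\mu+\zeta^2}}=\frac{\sqrt{5}}{6}\ln\frac{4x^{3/2}}{\sqrt{5}+\sqrt{16x^3+5}}.$$
The key algebraic identity is $(\sqrt{16x^3+5})^{2}-(\sqrt{5})^{2}=(4x^{3/2})^{2}$, which implies
$$\frac{4x^{3/2}}{\sqrt{5}+\sqrt{16x^3+5}}=\frac{\sqrt{16x^3+5}-\sqrt{5}}{4x^{3/2}}=\left(\frac{\sqrt{16x^3+5}+\sqrt{5}}{4x^{3/2}}\right)^{-1},$$
so the log flips sign and the phase displayed in (\ref{airytoch}) appears exactly.

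Finally, for the error I would simply bound the two $\theta$-terms from (\ref{eqbes0}) together, each multiplied by $\tfrac{\sqrt{x}}{3}$, with $\mu=\tfrac{5}{36}$, $\sqrt{\zeta}=\sqrt{\tfrac{2}{3}}\,x^{3/4}$, and $(\zeta^2+\tfrac{5}{36})^{3/2}=(16x^3+5)^{3/2}/216$. The constants collapse (the factors $2,\ 5/36,\ 216,\ \sqrt{2\pi},\ \sqrt{2/3}$) to produce the coefficient $\tfrac{10\sqrt{3}}{\sqrt{\pi}}$. There is no real obstacle here; the only place that needs care is the log-identity in the phase and the arithmetic of constants using $\mu=\tfrac{5}{36}$ and $\zeta=\tfrac{2x^{3/2}}{3}$. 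Everything is then assembled into the stated formula, valid for $x>0$ since $\zeta>0$ places us in the range of (\ref{eqbes0}).
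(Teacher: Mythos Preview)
Your proposal is correct and follows exactly the route the paper intends: the corollary is stated without proof, merely with ``Similarly to Corollary~\ref{corairy} we obtain'', meaning one applies Theorem~\ref{thbesse0} (case $|\nu|\le\tfrac12$) to $J_{\pm 1/3}(\zeta)$ in the identity $Ai(-x)=\tfrac{\sqrt{x}}{3}\bigl(J_{-1/3}(\zeta)+J_{1/3}(\zeta)\bigr)$, and your arithmetic for the amplitude, phase, and error constant all checks out. One small remark: the ``key algebraic identity'' you invoke for the logarithm is unnecessary---you only need $\ln(a/b)=-\ln(b/a)$ to pass from $\tfrac{\sqrt5}{6}\ln\tfrac{4x^{3/2}}{\sqrt5+\sqrt{16x^3+5}}$ to the form displayed in (\ref{airytoch}).
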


The argument of the cosine in (\ref{eqbes0}) and (\ref{airytoch}) (but not in (\ref{eqbes1})) can be simplified at the cost of a weaker numerical constant at the error term. Namely, it is easy to verify the following elementary inequalities:
$$\sqrt{x^2+\mu}+ \sqrt{\mu} \, \ln { \frac{x}{\sqrt{\mu} +\sqrt{\mu+x^2}} \,} =x-\frac{\mu}{2x}+\theta^2 \frac{\mu^2}{24 x^3} \, , \; \;x>0,$$

$$\frac{\sqrt{16x^3+5}}{6}- \frac{\sqrt{5}}{6} \,
\ln \frac{\sqrt{16x^3+5}+\sqrt{5}}{4x^{3/2}}= \frac{2}{3}\,x^{3/2}- \frac{5}{48}\,x^{-3/2}+\theta^2 \frac{25}{9216}\,x^{-9/2} \, .$$
Since $|\cos (x+\epsilon)-\cos x| \le \epsilon,$ we obtain
\begin{equation}
J_\nu(x)=
\sqrt{\frac{2}{\pi}} \, \frac{ \cos{ \left(x-\frac{\mu}{2x} - \omega_\nu \right) }}{(x^2+\mu)^{1/4}} + \theta \frac{25 \mu}{24 \sqrt{2 \pi } \,x^3 (x^2+\mu)^{1/4} } \, ,\;\; x>0,\; \;
|\nu| \le \frac{1}{2} \, ,
\end{equation}

\begin{equation}
Ai(-x)=\frac{2 \sqrt{x} \cos \left(\frac{2}{3}\,x^{3/2}- \frac{5}{48}\,x^{-3/2} -\frac{\pi}{4} \,\right)}{\sqrt{\pi} \, (16x^3+5)^{1/4}} + \theta \frac{5}{9 \sqrt{\pi} \, x^4 (16x^3+5)^{1/4}} \, .
\end{equation}

In particular, the last formula yields rather sharp approximations for the zeros of Airy (e.g. already for the first zero the error is less than 0.00122), and, in view of the inequality (\ref{firstzerbes}), the Bessel function.
\begin{theorem}
\begin{equation}
\label{airyzerosapprox}
a_s=16^{-2/3} \left( m+\sqrt{m^2+40} \, \right)^{2/3}+ \theta \frac{1280 \pi}{9m^3 (m^2+40)^{1/6}}\,,
\end{equation}
where $a_s$ is $s^{th}$ positive zero of $Ai(-x)$ and $m=(12s-3)\pi.$
\end{theorem}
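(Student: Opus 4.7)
The plan is to substitute the refined approximation for $Ai(-x)$ displayed just above the theorem into the defining equation $Ai(-a_s)=0$, thereby reducing the location of $a_s$ to a simple algebraic equation in an explicit phase. Writing
$$\phi(x)=\frac{2}{3}x^{3/2}-\frac{5}{48}x^{-3/2}-\frac{\pi}{4},$$
that approximation reads
$$Ai(-x)=\frac{2\sqrt{x}\,\cos\phi(x)}{\sqrt{\pi}\,(16x^3+5)^{1/4}}+\theta\,\frac{5}{9\sqrt{\pi}\,x^4(16x^3+5)^{1/4}}.$$
Setting $Ai(-a_s)=0$ and clearing the common prefactor gives the clean inequality $|\cos\phi(a_s)|\le 5/(18\,a_s^{9/2})$. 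I would then identify the unperturbed zero $a_s^{*}$ as the unique positive solution of $\phi(a_s^{*})=(2s-1)\pi/2$. Substituting $u=x^{3/2}$ with $m=(12s-3)\pi$ turns the latter into the quadratic $32u^2-4mu-5=0$, whose positive root is $u=(m+\sqrt{m^2+40})/16$; raising to the $2/3$ power recovers the closed form $a_s^{*}=16^{-2/3}(m+\sqrt{m^2+40})^{2/3}$ of the theorem. Since $\phi$ is strictly increasing on $(0,\infty)$ and the leading-order expression has a monotone phase, a standard zero-counting argument picks out the correct branch of the cosine, so no ambiguity in $s$ arises.

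Next, writing $\varepsilon=\phi(a_s)-\phi(a_s^{*})$ and using the identity $\cos\bigl((2s-1)\pi/2+\varepsilon\bigr)=\pm\sin\varepsilon$ together with Jordan's inequality $|\varepsilon|\le\tfrac{\pi}{2}|\sin\varepsilon|$ (valid for $|\varepsilon|\le\pi/2$), the cosine bound converts into $|\phi(a_s)-\phi(a_s^{*})|\le 5\pi/(36\,a_s^{9/2})$. Because $\phi'(x)=\sqrt{x}+\tfrac{5}{32}x^{-5/2}\ge\sqrt{x}$, the mean value theorem then yields
$$|a_s-a_s^{*}|\le\frac{5\pi}{36\,a_s^{9/2}\sqrt{\xi}}$$
for some $\xi$ between $a_s$ and $a_s^{*}$. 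Replacing $a_s^{9/2}\sqrt{\xi}$ by its natural lower bound $(a_s^{*})^{5}=(m+\sqrt{m^2+40})^{10/3}/16^{10/3}$ (using monotonicity of the endpoints, with any small one-sided discrepancy absorbed by the slack produced in the next step) gives
$$|a_s-a_s^{*}|\le\frac{5\pi\cdot 16^{10/3}}{36\,(m+\sqrt{m^2+40})^{10/3}}.$$

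The final step is a purely algebraic conversion to the form stated in the theorem, amounting to the elementary inequality $2^{10/3}\,m^3(m^2+40)^{1/6}\le(m+\sqrt{m^2+40})^{10/3}$. Cubing reduces it to $1024\,m^9\sqrt{m^2+40}\le(m+\sqrt{m^2+40})^{10}$, and this follows from two applications of AM-GM: $(m+y)^{10}\ge 2^{10}(my)^{5}=1024\,m^5y^5$, together with $y^5\ge m^4 y$, both valid since $y=\sqrt{m^2+40}\ge m$. The main obstacle is really the legitimacy of the very first step: to apply Jordan's inequality and to ensure that $\phi(a_s)$ lies in the window $(2s-1)\pi/2+[-\pi/2,\pi/2]$ around the correct branch, one must know a priori that the error in the Airy approximation is already noticeably smaller than $1$ at $x=a_s$. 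For moderate and large $s$ this is immediate from the explicit bound $5/(18\,a_s^{9/2})$; the delicate case is $s=1$, which is precisely why the numerical remark preceding the theorem (the explicit check that the error at the first zero is less than $0.00122$) is not a decorative observation but an essential ingredient that makes the argument go through at the smallest admissible $s$.
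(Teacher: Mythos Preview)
Your argument follows the paper's line almost exactly: both start from the refined approximation, deduce $|\cos\phi(a_s)|\le 5/(18\,a_s^{9/2})$, convert this via the Jordan-type bound $|\varepsilon|\le\tfrac{\pi}{2}|\sin\varepsilon|$ into a phase displacement, and then recover $a_s^{*}$ from the quadratic $32u^2-4mu-5=0$ in $u=x^{3/2}$. The only mechanical difference is that the paper keeps the perturbation inside the quadratic, solving $32u^2-4(m+12\epsilon)u-5=0$ exactly and then expanding the root in $\epsilon$, whereas you invert $\phi$ via the mean value theorem.

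There is, however, one step in your version that is not clean. Replacing $a_s^{9/2}\sqrt{\xi}$ by $(a_s^{*})^5$ is unjustified when $a_s<a_s^{*}$, and your appeal to ``slack produced in the next step'' does not close the gap: at $s=1$ your AM--GM inequality $2^{10/3}m^3(m^2+40)^{1/6}\le(m+\sqrt{m^2+40})^{10/3}$ holds with only about $3\%$ to spare, so an unknown factor $(a_s^{*}/a_s)^5$ cannot simply be absorbed without a bootstrap you do not supply. The paper avoids this circularity by first extracting a crude lower bound on $a_s$ directly from the phase equation: using only $a_1>2$ one checks $\tfrac{5}{48}x^{-3/2}>\tfrac{5\pi}{36}x^{-9/2}$, whence $\tfrac{2}{3}a_s^{3/2}>m/12$ and therefore $\tfrac{5\pi}{36}a_s^{-9/2}<640\pi/(9m^3)$, with no reference to $a_s^{*}$ at all. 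That is the device you are missing. Relatedly, your claim that the numerical remark about the error $0.00122$ is an ``essential ingredient'' is not accurate: the paper's proof uses only the much coarser input $a_1>2$, and the $0.00122$ figure is purely illustrative.
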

\begin{proof}
Elementary arguments show that $|\sin x|< \epsilon$ implies $\sin (x+\frac{\pi \theta  \epsilon}{2}  ) =0$ for some
$\theta, \; \; |\theta| \le 1.$ Therefore, $Ai(-x)=0$ means
$$\frac{2}{3} \,x^{3/2}- \frac{5}{48} \,x^{-3/2} -\frac{3\pi}{4}+\theta \,
\frac{5\pi}{36 }\,x^{-9/2}=\pi s, \;\;\; s=0,1,2,...$$
Since $a_1 =2.33...,$ we may assume $x>2.$ Then
$$\frac{5}{48} \,x^{-3/2}> \frac{5\pi}{36 } \,x^{-9/2},$$
hence
$$ \frac{2}{3} \,x^{3/2}>\frac{3\pi}{4}+\pi s,$$
and we obtain the following estimate
$$\frac{5\pi}{36 }\, x^{-9/2} < \frac{640}{243\pi^2 (4s+3)^3} \, .$$
Thus, for $(s+1)^{th}$ zero this gives the equation
$$
\frac{2}{3} \,x^{3/2}- \frac{5}{48} \,x^{-3/2}=\frac{3\pi}{4}+\pi s+ \theta \,\frac{640 \pi }{9 m^3} :=\frac{3\pi}{4}+\pi s+ \epsilon ,
$$
with the relevant solution
$$x=16^{-2/3} \left(m+12 \epsilon+ \sqrt{(m+12 \epsilon)^2+40} \, \right)^{2/3}.$$
After some calculations one gets
$$x=16^{-2/3} \left(m+ \sqrt{m^2+40} \, \right)^{2/3}\left(
1+\theta \frac{8\epsilon}{\sqrt{m^2+40}} \, \right)=$$
$$
16^{-2/3} \left(m+ \sqrt{m^2+40} \, \right)^{2/3}+\theta \frac{1280 \pi}{9m^3 (m^2+40)^{1/6}}\,.
$$
This completes the proof.
\end{proof}
Formula (\ref{airyzerosapprox}) can be simplified at the cost of slightly weaker numerical constant. Namely, as one can checks
$$0 <\frac{1}{4} (m^2+20)^{1/3} - 16^{-2/3} \left( m+\sqrt{m^2+40} \, \right)^{2/3} <
\frac{25}{3 m^3 (m^2+40)^{1/6} } \, ,$$
 yielding
 \begin{equation*}
a_s=\frac{1}{4} (m^2+20)^{1/3}+ \theta \, \frac{456}{m^3 (m^2+40)^{1/6}} \, .
\end{equation*}
 Finally, comparing the numerical values of the zeros of $Ai(-x)$ with (\ref{airyzerosapprox}) leads to the following conjecture:
 \begin{equation*}
a_s<16^{-2/3} \left( m+\sqrt{m^2+40} \, \right)^{2/3}.
\end{equation*}


\end{document}